\documentclass[11pt,leqno]{amsart}
\usepackage{a4wide}
\usepackage{amssymb}
\usepackage{amsmath}
\usepackage{amsthm}
\usepackage{xypic}
\usepackage[usenames]{color}

\theoremstyle{plain}

\newcommand{\im}{\operatorname{im}}
\newcommand{\id}{\operatorname{id}}
\newcommand{\tr}{\operatorname{tr}}
\newcommand{\TR}{\operatorname{TR}}
\newcommand{\DET}{\operatorname{DET}}
\newcommand{\Irr}{\operatorname{Irr}}
\newcommand{\Ind}{\operatorname{Ind}}
\newcommand{\End}{\operatorname{End}}
\newcommand{\Hom}{\operatorname{Hom}}

\newtheorem{theorem}{Theorem}[section]
\newtheorem{corollary}[theorem]{Corollary}
\newtheorem{lemma}[theorem]{Lemma}
\newtheorem{remark}[theorem]{Remark}
\newtheorem{proposition}[theorem]{Proposition}

\newtheorem*{thm}{Theorem}

\frenchspacing

\title[]{A splitting for $K_1$ of completed group rings}
\author[]{Peter Schneider and Otmar Venjakob}
\address{Universit\"{a}t M\"{u}nster,  Mathematisches Institut,  Einsteinstr. 62,
48291 M\"{u}nster,  Germany,
 http://www.uni-muenster.de/math/u/schneider/ }%
\email{pschnei@uni-muenster.de }%

\address{Universit\"{a}t Heidelberg,  Mathematisches Institut,  Im Neuenheimer Feld 288,  69120
Heidelberg,  Germany,
 http://www.mathi.uni-heidelberg.de/$\,\tilde{}\,$venjakob/}
\email{venjakob@mathi.uni-heidelberg.de}

\subjclass{19B28, 11S23 }%
\keywords{Completed group ring, Iwasawa algebra, algebraic K-group, Adams operator, norm operator, Coleman isomorphism}%

\date{June 2, 2010}%

\begin{document}
\maketitle

\section*{introduction}

This paper is motivated by the following result of Coleman (\cite{Col}). Inside the algebraic closure $\overline{\mathbb{Q}_p}$ of the field of $p$-adic numbers $\mathbb{Q}_p$ we fix, for any $n \geq 0$, a primitive $p^n$-th root of unity $\epsilon_n$ in such a way that $\epsilon_{n+1}^p = \epsilon_n$. We let $O_n$ denote the ring of integers in the field $\mathbb{Q}_p(\epsilon_n)$. The groups of units $O_n^\times$ in these rings form a projective system
\begin{equation*}
    \ldots \longrightarrow O_{n+1}^\times \xrightarrow{\; {\rm norm} \;} O_n^\times \longrightarrow \ldots \longrightarrow O_1^\times \longrightarrow \mathbb{Z}_p^\times
\end{equation*}
with respect to the Galois norms. On the other hand one considers the group of units $\mathbb{Z}_p[[T]]^\times$ in the formal power series ring in one variable $T$ over the ring of $p$-adic integers $\mathbb{Z}_p$. Coleman constructs a natural ``norm'' operator $\mathcal{N}$ on this group and shows that the map
\begin{align*}
    (\mathbb{Z}_p[[T]]^\times)^{\mathcal{N} = \mathrm{id}} & \xrightarrow{\; \cong \;} \varprojlim O_n^\times \\
    F & \longmapsto (F(\epsilon_n - 1))_n
\end{align*}
is an isomorphism. This result is of basic importance in Iwasawa
theory. There is a twist added to it by Fontaine (\cite{Fon}) which
is the starting point of our investigation. By the theory of the
field of norms the group $\varprojlim O_n^\times$, in fact,
coincides with the group of units in the ring of integers $O_E$ of a
specific local field $E$ of characteristic $p$. The choice of the
$\epsilon_n$ gives rise to a choice of a prime element in $O_E$ so
that we may identify $O_E$ with the ring $\mathbb{F}_p[[T]]$ of
formal power series over $\mathbb{F}_p$. With these identifications
the Coleman map simply is the map induced by the natural projection
$\mathbb{Z}_p[[T]] \longrightarrow \mathbb{F}_p[[T]]$ of power
series rings. Hence Coleman's theorem says that the eigenspace
$(\mathbb{Z}_p[[T]]^\times)^{\mathcal{N} = \mathrm{id}}$ of the norm
operator $\mathcal{N}$ provides a natural section for the projection
map $\mathbb{Z}_p[[T]]^\times \longrightarrow
\mathbb{F}_p[[T]]^\times$.

We remark that the group of units in a commutative local ring has a more conceptual interpretation as the algebraic $K$-group $K_1$ of that ring. From this point of view we are dealing with the natural map $K_1(\mathbb{Z}_p[[T]]) \longrightarrow K_1(\mathbb{F}_p[[T]])$. We also recall that the power series rings $\mathbb{Z}_p[[T]]$ and $ \mathbb{F}_p[[T]]$ are isomorphic to the completed group rings of the additive group $G := \mathbb{Z}_p$ over $\mathbb{Z}_p$ and $\mathbb{F}_p$, respectively.

In noncommutative Iwasawa theory one investigates towers of number fields whose Galois group $G$ is much more general, in particular possibly noncommutative, than the group $G = \mathbb{Z}_p$. The problem of constructing $p$-adic $L$-functions in this context is closely related to the computation of the algebraic $K$-group $K_1(\Lambda(G))$ of the completed group ring $\Lambda(G)$ of $G$ over $\mathbb{Z}_p$. Clearly, Coleman's theorem then suggests the investigation of the natural map
\begin{equation*}
    K_1(\Lambda(G)) \longrightarrow K_1(\Omega(G))
\end{equation*}
where $\Omega(G)$ is the completed group ring of $G$ over $\mathbb{F}_p$. The main purpose of this paper is to provide a list of requirements on the group $G$ which guarantees the existence of a splitting of the above map which is characterized by a certain ``norm type'' operator equation in $K_1(\Lambda(G))$.

We let $p \neq 2$ be an odd prime number and $G$ be a pro-$p$ $p$-adic Lie group. First of all we will construct an ``Adams operator''
\begin{equation*}
    \widetilde{\Phi} : K_1(\Lambda(G)) \longrightarrow K_1(\Lambda(G)) \ .
\end{equation*}
Next we assume that $G$ satisfies:
\begin{itemize}
  \item[($\Phi$)] The map $\phi : G \longrightarrow G$ given by $\phi(g) := g^p$ is injective, and $\phi^n(G)$ is open in $G$ for any $n \geq 1$.
  \item[(P)] $\phi(G)$ is a subgroup of $G$.
\end{itemize}
Then $\phi(G)$ is an open normal subgroup in $G$. Hence $\Lambda(G)$ is a free module of rank $p^d := [G : \phi(G)]$ over $\Lambda(\phi(G))$. By general principles of $K$-theory we therefore have the norm map $N_{\Lambda(G)/\Lambda(\phi(G))} : K_1(\Lambda(G)) \longrightarrow K_1(\Lambda(\phi(G)))$, and we introduce the composite ``norm operator''
\begin{equation*}
    N_G : K_1(\Lambda(G)) \xrightarrow{\; N_{\Lambda(G)/\Lambda(\phi(G))} \;} K_1(\Lambda(\phi(G))) \xrightarrow{\; can \;} K_1(\Lambda(G)) \ .
\end{equation*}
In order to formulate our third axiom (SK) we also need the completed group ring $\Lambda^\infty(G)$ of $G$ over $\mathbb{Q}_p$. We require that:
\begin{itemize}
  \item[(SK)] The natural map $K_1(\Lambda(G)) \longrightarrow K_1(\Lambda^\infty(G))$ is injective.
\end{itemize}
Our main result is the following.

\begin{thm}
If $G$ satisfies ($\Phi$), (P), and (SK) then the natural map $K_1(\Lambda(G)) \rightarrow K_1(\Omega(G))$ restricts to an isomorphism
\begin{equation*}
    K_1(\Lambda(G))^{N_G(.) = \widetilde{\Phi}(.)^{p^{d-1}}} \xrightarrow{\  \cong \ } K_1(\Omega(G)) \ .
\end{equation*}
\end{thm}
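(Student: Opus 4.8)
The plan is to reduce the theorem to a fixed point statement for a single ``Coleman operator'' on $K_1(\Lambda(G))$ and then to settle that statement by a successive approximation argument modelled on Coleman's original proof, the genuinely new tool in the noncommutative situation being an integral logarithm which linearises $K_1$.

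Since $G$ is pro-$p$ the ring $\Lambda(G)$ is local and complete and $\Omega(G) = \Lambda(G)/p\Lambda(G)$; hence $K_1(\Lambda(G)) \to K_1(\Omega(G))$ is surjective and its kernel $I$ is the closure of the image of the congruence subgroups $1 + p M_n(\Lambda(G))$, in particular a (torsion free) pro-$p$ group. The central algebraic step is to produce an operator $\mathcal{N} \colon K_1(\Lambda(G)) \to K_1(\Lambda(G))$ satisfying the Coleman factorisation
\begin{equation*}
  N_G(u) = \widetilde{\Phi}\bigl(\mathcal{N}(u)\bigr)^{p^{d-1}} \qquad \text{for all } u \in K_1(\Lambda(G)).
\end{equation*}
Its existence rests on the claim that $N_G(u)$ always lies in the image of the endomorphism $v \mapsto \widetilde{\Phi}(v)^{p^{d-1}}$, and its uniqueness on the injectivity of that endomorphism; this is precisely where (SK) and the hypothesis $p \neq 2$ enter, because over $\mathbb{Q}_p$ the Adams operator is transparent — it is read off the trace and character theory of $\Lambda^\infty(G)$ — and because $K_1(\Lambda(G))$, embedded in $K_1(\Lambda^\infty(G))$ by (SK), is $p$-torsion free. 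Granting the factorisation, and since $K_1(\Lambda(G))$ is abelian and $p$-torsion free, the equation $N_G(u) = \widetilde{\Phi}(u)^{p^{d-1}}$ is seen to be equivalent to $\mathcal{N}(u) = u$. So the eigenspace in the theorem equals $K_1(\Lambda(G))^{\mathcal{N} = \mathrm{id}}$, and it remains to prove that the natural map $K_1(\Lambda(G))^{\mathcal{N} = \mathrm{id}} \to K_1(\Omega(G))$ is bijective.

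All the operators involved are compatible with reduction modulo $p$, and in characteristic $p$ the identity $N_G(u) = \widetilde{\Phi}(u)^{p^{d-1}}$ holds identically — both sides being two descriptions of the $p^{d}$-power operation attached to the degree $p^{d}$, purely inseparable type extension $\Omega(\phi(G)) \subseteq \Omega(G)$ — so $\mathcal{N}$ induces the identity on $K_1(\Omega(G))$ and in particular preserves $I$. Consequently the desired bijectivity is equivalent to: (i) $I \cap K_1(\Lambda(G))^{\mathcal{N} = \mathrm{id}} = 1$; and (ii) every class in $K_1(\Lambda(G))$ is congruent modulo $I$ to an $\mathcal{N}$-fixed one. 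Both follow once one proves that $\mathcal{N}$ is topologically contracting on $I$, i.e. that there is a filtration $I = I_1 \supseteq I_2 \supseteq \cdots$ of closed subgroups with $\bigcap_m I_m = 1$ and $\mathcal{N}(I_m) \subseteq I_{m+1}$: assertion (i) is then immediate, and for (ii) one lifts the given class to $u_0$, forms the sequence $u_{k+1} := \mathcal{N}(u_k)$, and notes that the successive discrepancies $u_{k+1} u_k^{-1} = \mathcal{N}^{k}(u_1 u_0^{-1})$ lie in $\mathcal{N}^k(I_1) \subseteq I_{k+1}$, so that $(u_k)$ is Cauchy and converges, by completeness of $\Lambda(G)$, to an $\mathcal{N}$-fixed element which still reduces to the original class.

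The contraction estimate is the heart of the matter and the one place where the commutative argument — in which $\mathcal{N}f$ is an explicit product of Galois conjugates of $f$ in a power series ring and the relevant filtration is the obvious one — has no immediate analogue, $K_1$ being noncommutative and admitting no valuation. The remedy is to run the estimate through the integral logarithm $\Gamma \colon K_1(\Lambda(G)) \to \Lambda(G)/[\Lambda(G),\Lambda(G)]$. One shows that $\Gamma$ intertwines $N_G$ with the additive operator induced by the trace form of the free extension $\Lambda(\phi(G)) \subseteq \Lambda(G)$, and intertwines $\widetilde{\Phi}$ with the operator sending the conjugacy class of $g$ to that of $g^p$, which by axiom $(\Phi)$ is injective on conjugacy classes. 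Feeding these through the Coleman factorisation turns it into a linear equation whose solution identifies $\mathcal{N}$, on $\Gamma(I) \subseteq p\,\Lambda(G)/[\Lambda(G),\Lambda(G)]$, with the operator ``restrict to the part supported on those conjugacy classes that meet $\phi(G)$, pull it back along the power map, and multiply by $p$'' — which visibly raises the $p$-adic filtration of $\Lambda(G)/[\Lambda(G),\Lambda(G)]$ by one; since that module is $p$-adically separated, this yields the filtration $\{I_m\}$ with $\bigcap_m I_m = 1$. The substantive work is therefore exactly the three compatibilities of $\Gamma$ — with $N_G$, with $\widetilde{\Phi}$, and with reduction modulo $p$ — together with the determination of $\ker \Gamma$ on $I$ (again an input of (SK)), and the control of the denominators introduced by $\Gamma$ and by the ordinary $p$-adic logarithm, for which $p \neq 2$ is indispensable.
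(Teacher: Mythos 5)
Your overall architecture — reduce to a fixed‑point statement for a single operator $\mathcal{N}$ and run a successive‑approximation argument through the integral logarithm — is a faithful transplant of Coleman's original proof, but it stands or falls with the existence of the ``Coleman factorisation'' $N_G(u)=\widetilde{\Phi}(\mathcal{N}(u))^{p^{d-1}}$ for \emph{every} $u\in K_1(\Lambda(G))$, and this is exactly the point you do not prove. Saying that it ``rests on the claim that $N_G(u)$ always lies in the image of $v\mapsto\widetilde{\Phi}(v)^{p^{d-1}}$'' and that this is ``where (SK) and $p\neq 2$ enter'' is not an argument. Concretely: applying $\Gamma$ turns the required identity into $p^{d-1}\Phi(\Gamma(\mathcal{N}u))=tr'_G(\Gamma u)$, and solving this already needs the direct sum decomposition $\Lambda(G)^{\mathrm{ab}}=\mathbb{Z}_p[[\mathcal{O}(G)\setminus\Psi(\mathcal{O}(G))]]\oplus(\Phi-p)\Lambda(G)^{\mathrm{ab}}$ together with the invertibility of $\Phi$ onto $\mathbb{Z}_p[[\Psi(\mathcal{O}(G))]]$ — i.e.\ the entire analytic content of the paper's proof. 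Worse, matching $\Gamma$-images only determines $\mathcal{N}(u)$ up to $\ker\Gamma=\mu_{p-1}\times G^{\mathrm{ab}}$, and on $\ker\Gamma$ the map $v\mapsto\widetilde{\Phi}(v)^{p^{d-1}}$ is $v\mapsto v^{p^d}$, whose image $\mu_{p-1}\times\phi^d(G)^{\mathrm{ab}}$ is a \emph{proper} subgroup of $\ker\Gamma$ as soon as $G\neq 1$; you give no reason why the residual discrepancy $N_G(u)\,\widetilde{\Phi}(v_0)^{-p^{d-1}}$ should land in that small subgroup, and I see no way to arrange it without first decomposing $K_1(\Lambda(G))$ as $\ker\bigl(K_1(\Lambda(G))\to K_1(\Omega(G))\bigr)\times K_1^\Phi(\Lambda(G))$ — which is the theorem. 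In short, the construction of $\mathcal{N}$ is at least as hard as the statement you are trying to prove, and once the ingredients for it are in place the theorem follows by the paper's short exact‑sequence argument (identify the eigenspace with $\Gamma^{-1}(\mathbb{Z}_p[[\mathcal{O}(G)\setminus\Psi(\mathcal{O}(G))]])$ via the modified trace $tr'_G$, and observe that this subgroup is a section of $K_1(\Lambda(G))\to K_1(\Omega(G))$); no contraction or iteration is needed.

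Two further points. First, your assertion that mod $p$ the identity $N_G(u)=\widetilde{\Phi}(u)^{p^{d-1}}$ ``holds identically, both sides being two descriptions of the $p^d$-power operation attached to the purely inseparable type extension $\Omega(\phi(G))\subseteq\Omega(G)$'' is a genuine lemma, not a formality: $\Omega(G)$ over $\Omega(\phi(G))$ is a noncommutative ring extension, and the paper proves that the composed norm is the $[H:N]$-power map (Prop.\ 2.3) by exhibiting an explicit basis $1\otimes(gN-1)^m$ of $\Omega(H)\otimes_{\Omega(N)}\Omega(H)$ in which the multiplication matrix is triangular with $x$ on the diagonal. Second, even granting $\mathcal{N}$, your description of the induced operator on $\Gamma(I)$ as ``restrict to the classes meeting $\phi(G)$, pull back along the power map, multiply by $p$'' drops the component supported off $\Psi(\mathcal{O}(G))$; the correct formula is $y_1+p\Phi^{-1}(y_2)$, which is contracting only because elements of $\Gamma(I)=(\Phi-p)\Lambda(G)^{\mathrm{ab}}$ have their two components linked — a verification you would still need to supply.
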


Whereas ($\Phi$) and (P) are easily seen to hold for any uniform pro-$p$-group $G$ the axiom (SK) is of a more subtle nature. In the last section we will show that the group $G$ of lower triangular unipotent matrices in $GL_n(\mathbb{Z}_p)$, for any $n \geq 1$, satisfies (SK).

There is the aspect of groups of local units in the original Coleman isomorphism. In our present general setting this is disguised in the group $K_1(\Lambda^\infty(G))$. The ring $\Lambda^\infty(G)$ is a projective limit of semisimple $\mathbb{Q}_p$-algebras. The group $K_1(\Lambda^\infty(G))$ therefore can be computed, via the determinant map, in purely representation theoretic terms through a Fr\"ohlich style Hom-description
\begin{equation*}
    K_1(\Lambda^\infty(G)) \cong \Hom_{\mathcal{G}_p}(R_G, \overline{\mathbb{Q}_p}^\times) \ .
\end{equation*}
Here $R_G$ denotes the representation ring of $G$, i.\ e., the free abelian group on the set of isomorphism classes of irreducible $\overline{\mathbb{Q}_p}$-representations of $G$ which are trivial on some open subgroup. The homomorphisms in the right hand side are assumed to be equivariant for the absolute Galois group $\mathcal{G}_p := \mathrm{Gal}(\overline{\mathbb{Q}_p}/\mathbb{Q}_p)$. We extend our operators $\widetilde{\Phi}$ and $N_G$ from $K_1(\Lambda(G))$ to $K_1(\Lambda^\infty(G))$ and there prove them to be equal, on the Hom-description, to the adjoints of the usual Adams operator $\psi^p$ and the induction operator
\begin{equation*}
    \iota^p([V]) := \big[ V \otimes_{\mathbb{Q}_p} \mathbb{Q}_p[G/\phi(G)] \big]
\end{equation*}
on $R_G$, respectively. Under our requirements on the group $G$ this leads to a natural embedding
\begin{equation*}
     K_1(\Lambda(G))^{N_G(.) = \widetilde{\Phi}(.)^{p^{d-1}}} \hookrightarrow \Hom_{\mathcal{G}_p}(R_G/\im(\iota^p - p^{d-1}\psi^p), \overline{\mathbb{Q}_p}^\times)
\end{equation*}
which is the generalization of the Coleman map. But unfortunately, for groups $G$ other than $G = \mathbb{Z}_p$, this map is far from being surjective. At this point it remains an open problem to determine its image.

In the first section we will review the formalism of exponential maps which provides an identification of the kernel of the map $K_1(\Lambda(G)) \rightarrow K_1(\Omega(G))$ with the quotient $\Lambda(G)^{\mathrm{ab}}$ of the additive group $\Lambda(G)$ by the additive commutators. In the second section we will introduce the integral $p$-adic logarithm map $\Gamma : K_1(\Lambda(G)) \longrightarrow \Lambda(G)^{\mathrm{ab}}$ of Oliver and Taylor. It is a very careful analysis of the interplay between the exponential map and $\Gamma$ which will enable us to define the Adams operator $\widetilde{\Phi}$ and to prove the above theorem in this section. The third section will be devoted to the discussion of the group $K_1(\Lambda^\infty(G))$ and its Hom-description. In the fianl section we establish the axiom (SK) for unipotent radicals of Borel in $GL_n(\mathbb{Z}_p)$.

We thank K.\ Kato for pointing out to us the results from \cite{Oli} \S2b. Both of us are grateful
to the Centro de Investigaci\'{o}n en Matem\'{a}ticas (CIMAT, Guanajuato, Mexico) and the Newton Institute
(Cambridge) for support and a stimulating environment while we worked on this paper. The first,
resp.\ second, author acknowledges support by the DFG-Sonderforschungsbereich 478, resp. by DFG-
and ERC-grants.

\section{Exponential maps}

In this section we begin by recalling the formalism of the exponential map, as
developed in \cite{Oli} \S2b, for any (possibly noncommutative)
$\mathbb{Z}_p$-algebra $A$ which is finitely generated and free as a
$\mathbb{Z}_p$-module. Following \cite{Oli} we call such a ring $A$
a $p$-adic order. Throughout the paper we assume $p \neq 2$. Let $J \subseteq A$ denote the Jacobson radical. The
ring $A$ is semi-local in the sense that $A/J$ is semisimple. It is
well known (cf.\ \cite{Bas} V.9.1) that in this situation the
natural map
\begin{equation*}
    A^\times/[A^\times,A^\times] \xrightarrow{\; \cong\;} K_1(A)
\end{equation*}
is an isomorphism. In \cite{Oli} Lemma 2.7 and Thm.\ 2.8 it is shown
that the usual exponential power series converges on $pA$ inducing a
bijection
\begin{equation*}
    pA \leftrightarrows 1 + pA
\end{equation*}
with inverse given by the equally converging logarithm power series.
Moreover, if $[A,A]$ denotes the additive subgroup of $A$ generated
by all additive commutators of the form $[a,b] = ab - ba$ with $a,b
\in A$ and if $E'(A,pA)$ denotes the kernel of the natural map $1 +
pA \longrightarrow K_1(A,pA)$ into the relative $K$-group then the
above bijections induce isomorphisms of groups
\begin{equation*}
    pA/p[A,A] \leftrightarrows 1 + pA/E'(A,pA) \cong K_1(A,pA)
\end{equation*}
which are inverse to each other and which we denote by $exp$ and
$log$, respectively. Note that the second isomorphism above is a
consequence of Swan's presentation (\cite{Oli} Thm.\ 1.15) which
also says that $E'(A,pA)$ is the subgroup generated by all elements
of the form $(1 + pab)(1 + pba)^{-1}$ for $a,b \in A$. Since $A$ is
$p$-torsionfree it is convenient to renormalize to the isomorphism
\begin{equation*}
    \exp(p.) : A/[A,A] \xrightarrow{\; \cong\;} K_1(A,pA) \ .
\end{equation*}
Obviously everything is covariantly functorial in unital ring
homomorphisms.

For any $n \in \mathbb{N}$ let $M_n(A)$ denote the $p$-adic order of
$n$ by $n$ matrices over $A$. The group homomorphisms
\begin{equation*}
\begin{aligned}
    A & \longrightarrow M_n(A) \\
    a & \longmapsto
    \begin{pmatrix}
    a &  \\ & 0
    \end{pmatrix}
\end{aligned}
\qquad\textrm{and}\qquad
\begin{aligned}
    A^\times & \longrightarrow GL_n(A) \\
    a & \longmapsto
    \begin{pmatrix}
    a & & & 0 \\
     & 1 & & \\
     & & \ddots & \\
    0 & & & 1
    \end{pmatrix}
\end{aligned}
\end{equation*}
then induce the commutative diagram
\begin{equation*}
    \xymatrix{
      M_n(A)/[M_n(A),M_n(A)] \ar[r]^{\quad \exp(p.)} & K_1(M_n(A),pM_n(A))  \\
      A/[A,A] \ar[u]_{\cong} \ar[r]^{\exp(p.)} & K_1(A,pA) \ar[u]^{\cong}  }
\end{equation*}
where the perpendicular maps are isomorphisms by Morita invariance.
In fact, the usual matrix trace provides an inverse for the left
perpendicular map (cf.\ \cite{Lod} Lemma 1.1.7).

Consider now a unital homomorphism $A \longrightarrow B$ of $p$-adic
orders such that $B$ is finitely generated free of rank $n$ as a
right $A$-module. Choosing a basis of $B$ over $A$ the left
multiplication of $B$ on itself gives a unital algebra homomorphism
$B \longrightarrow M_n(A)$ and hence, by functoriality, a
commutative diagram
\begin{equation*}
    \xymatrix{
      B/[B,B] \ar[d] \ar[r]^{\exp(p.)} & K_1(B,pB) \ar[d] \\
      M_n(A)/[M_n(A),M_n(A)] \ar[r]^{\quad \exp(p.)} & K_1(M_n(A),pM_n(A)) \ .   }
\end{equation*}
By combination with the previous diagram we obtain the canonical
commutative diagram
\begin{equation*}
    \xymatrix{
      B/[B,B] \ar[d]_{tr_{B/A}} \ar[r]^{\exp(p.)} & K_1(B,pB) \ar[d]^{N_{B/A}} \\
      A/[A,A] \ar[r]^{\exp(p.)} & K_1(A,pA)   }
\end{equation*}
in which $tr_{B/A}$ is the usual trace map and $N_{B/A}$ is the
transfer map in $K$-theory (cf.\ \cite{Oli} \S1d).

Now let $G$ be any profinite group. We then have the completed group
rings
\begin{equation*}
    \Lambda(G) := \varprojlim \mathbb{Z}_p[G/U]
    \qquad\textrm{and}\qquad \Omega(G) := \varprojlim \mathbb{F}_p[G/U]
\end{equation*}
of $G$ over $\mathbb{Z}_p$ and $\mathbb{F}_p$, respectively, where
$U$ runs over all open normal subgroups of $G$. Both carry a natural
compact topology. The ring $\Lambda(G)$ is also referred to as the
Iwasawa algebra of $G$. In the following we assume that $G$ contains an open normal pro-$p$ subgroup
which is topologically finitely generated. Then the rings $\Lambda(G)$ and $\Omega(G)$ are semi-local. Any $\mathbb{Z}_p[G/U]$ is a $p$-adic order, of course.
By a projective limit argument we deduce from the previous section
the isomorphism
\begin{equation*}
    \exp(p.) : \varprojlim
    \mathbb{Z}_p[G/U]/[\mathbb{Z}_p[G/U],\mathbb{Z}_p[G/U]]
    \xrightarrow{\:\cong\;} \varprojlim K_1(\mathbb{Z}_p[G/U],
    p\mathbb{Z}_p[G/U]) \ .
\end{equation*}
The left hand term clearly is equal to
$\Lambda(G)/\overline{[\Lambda(G),\Lambda(G)]}$ where
$\overline{[\Lambda(G),\Lambda(G)]}$ denotes the closure of
$[\Lambda(G),\Lambda(G)]$ in $\Lambda(G)$. To understand the right
hand term we start with the standard exact sequence of $K$-groups
\begin{equation*}
    K_2(\mathbb{F}_p[G/U]) \longrightarrow
    K_1(\mathbb{Z}_p[G/U],p\mathbb{Z}_p[G/U])
    \longrightarrow K_1(\mathbb{Z}_p[G/U])
    \longrightarrow K_1(\mathbb{F}_p[G/U]) \longrightarrow 0
\end{equation*}
where the zero at the end is immediate from the description of $K_1$
of the respective rings as a quotient of the unit group of the ring
(use \cite{Ros} Prop.\ 1.3.8). Using the isomorphism
$\mathbb{Z}_p[G/U]/[\mathbb{Z}_p[G/U],\mathbb{Z}_p[G/U] \cong
K_1(\mathbb{Z}_p[G/U],p\mathbb{Z}_p[G/U])$ from the previous section
we see that $K_1(\mathbb{Z}_p[G/U],p\mathbb{Z}_p[G/U])$ can be
viewed as the free $\mathbb{Z}_p$-module over the set of conjugacy
classes in $G/U$ and hence is $p$-torsionfree. On the other hand
$K_2(\mathbb{F}_p[G/U])$ is finite (\cite{Oli} Thm.\ 1.16). Hence
already
\begin{equation*}
    0 \longrightarrow
    K_1(\mathbb{Z}_p[G/U],p\mathbb{Z}_p[G/U])
    \longrightarrow K_1(\mathbb{Z}_p[G/U])
    \longrightarrow K_1(\mathbb{F}_p[G/U]) \longrightarrow 0
\end{equation*}
is exact. In fact, this is an exact sequence of countable projective
systems with respect to $U$. The corresponding transition maps for
the second and the third term are surjective (again by their
description in terms of units). This implies that the sequence
remains exact after passing to the projective limit with respect to
$U$. So we obtain the exact sequence
\begin{equation*}
    0 \longrightarrow \varprojlim
    K_1(\mathbb{Z}_p[G/U],p\mathbb{Z}_p[G/U])
    \longrightarrow \varprojlim K_1(\mathbb{Z}_p[G/U])
    \longrightarrow \varprojlim K_1(\mathbb{F}_p[G/U]) \longrightarrow 0 \ .
\end{equation*}
As a consequence of \cite{Oli} Thm.\ 2.10(ii) and \cite{FK} Prop.\
1.5.1 we have the natural isomorphisms
\begin{equation}\label{f:projlim}
    \varprojlim_U K_1(\mathbb{Z}_p[G/U]) \cong \varprojlim_{m,U}
    K_1(\mathbb{Z}/p^m\mathbb{Z}[G/U]) \cong K_1(\Lambda(G))
\end{equation}
and
\begin{equation*}
    \varprojlim_U K_1(\mathbb{F}_p[G/U])
    \cong K_1(\Omega(G)) \ .
\end{equation*}
Altogether we arrive at the basic exact sequence
\begin{equation}\label{f:exseq}
    0 \longrightarrow \Lambda(G)/\overline{[\Lambda(G),\Lambda(G)]}
    \xrightarrow{\;\exp(p.)\;} K_1(\Lambda(G)) \longrightarrow K_1(\Omega(G))
    \longrightarrow 0 \ .
\end{equation}
We emphasize that via the isomorphism $K_1(\Lambda(G)) \cong
\Lambda(G)^\times/[\Lambda(G)^\times,\Lambda(G)^\times]$ the map
$exp(p.)$ in this sequence is induced by the map $p\Lambda(G)
\longrightarrow 1 + p\Lambda(G)$ given by the exponential power
series.

Consider now a fixed open subgroup $H \subseteq G$. Then
$\Lambda(G)$ is finitely generated free of rank $[G:H]$ as a right
(or left) $\Lambda(H)$-module and so is $\mathbb{Z}_p[G/U]$ over
$\mathbb{Z}_p[H/U]$ for any open normal subgroup $U \subseteq G$
such that $U \subseteq H$. By passing to the projective limit we
obtain from the previous section and \cite{Oli} Prop.\ 1.18 the
commutative diagram
\begin{equation}\label{d:comdiag}
    \xymatrix{
      0  \ar[r] & \Lambda(G)/\overline{[\Lambda(G),\Lambda(G)]}
      \ar[d]_{tr_{\Lambda(G)/\Lambda(H)}} \ar[r]^{\qquad\ \exp(p.)}
      & K_1(\Lambda(G))
      \ar[d]^{N_{\Lambda(G)/\Lambda(H)}} \ar[r] & K_1(\Omega(G))
      \ar[d]^{N_{\Omega(G)/\Omega(H)}} \ar[r] & 0 \\
      0 \ar[r] & \Lambda(H)/\overline{[\Lambda(H),\Lambda(H)]} \ar[r]^{\qquad\ \exp(p.)}
      & K_1(\Lambda(H)) \ar[r] & K_1(\Omega(H)) \ar[r] &  0 \ . }
\end{equation}

\section{The integral $p$-adic logarithm}

In this section we assume $G$ to be a pro-$p$ $p$-adic Lie group (for some $p \neq 2$). In this case the rings $\Lambda(G)$ and $\Omega(G)$ are strictly local with residue field $\mathbb{F}_p$. As before $U$ runs over all open normal subgroups of $G$. The integral $p$-adic logarithm of Oliver and Taylor is the homomorphism
\begin{equation*}
    \Gamma = \Gamma_{G/U} : K_1(\mathbb{Z}_p[G/U]) \longrightarrow \mathbb{Z}_p[G/U]/[\mathbb{Z}_p[G/U], \mathbb{Z}_p[G/U]] =: \mathbb{Z}_p[G/U]^{\mathrm{ab}}
\end{equation*}
defined by
\begin{equation*}
    \Gamma(x) := log(x) - \frac{1}{p} \Phi(log(x))
\end{equation*}
with the additive map
\begin{align*}
    \Phi : \mathbb{Z}_p[G/U] & \longrightarrow \mathbb{Z}_p[G/U] \\
    \sum_{g \in G/U} a_g g & \longmapsto \sum_{g \in G/U} a_g g^p \ .
\end{align*}
We note that the latter induces an additive endomorphism of $\mathbb{Z}_p[G/U]^{\mathrm{ab}}$; this is an straightforward consequence of the identities $gh - hg = gh - h(gh)h^{-1}$ and $ghg^{-1} - h = (gh)g^{-1} - g^{-1}(gh)$. According to \cite{Oli} Thm.\ 6.6 and Thm.\ 7.3 the sequence
\begin{equation*}
    0 \rightarrow \mu_{p-1} \times (G/U)^{\mathrm{ab}} \times SK_1(\mathbb{Z}_p[G/U]) \rightarrow K_1(\mathbb{Z}_p[G/U]) \xrightarrow{\Gamma} \mathbb{Z}_p[G/U]^{\mathrm{ab}} \xrightarrow{\omega} (G/U)^{\mathrm{ab}} \rightarrow 0
\end{equation*}
is exact. Here $(G/U)^{\mathrm{ab}}$ denotes the maximal abelian quotient of $G/U$, the map $\omega$ is defined by
\begin{equation*}
    \omega(\sum_{g \in G/U} a_g g) := \prod_{g \in G/U} g^{a_g} \mod [G/U,G/U] \ ,
\end{equation*}
and
\begin{equation*}
    SK_1(\mathbb{Z}_p[G/U]) := \ker(K_1(\mathbb{Z}_p[G/U])
      \longrightarrow K_1(\mathbb{Q}_p[G/U])) \ .
\end{equation*}
The map $\mu_{p-1} \times (G/U)^{\mathrm{ab}} \rightarrow K_1(\mathbb{Z}_p[G/U])$ is induced by the obvious inclusion $\mu_{p-1} \times G/U \subseteq \mathbb{Z}_p[G/U]^\times$. Clearly the above exact sequence is natural in $U$ so that we may pass to the projective limit with respect to $U$. On all terms in the exact sequence except possibly the $SK_1$-term the transition maps are surjective. The $SK_1$-terms are finite by \cite{Oli} Thm.\ 2.5(i). Hence passing to the projective limit is exact. By setting $G^{\mathrm{ab}} := G/[G,G]$ (note that $G$, by \cite{DDMS} Thm.\ 8.32, is topologically finitely generated and hence $[G,G]$, by \cite{DDMS} Prop.\ 1.19, is closed in $G$),
\begin{equation*}
    SK_1(\Lambda(G)) := \varprojlim SK_1(\mathbb{Z}_p[G/U]) \ ,
\end{equation*}
and using \eqref{f:projlim} we therefore obtain in the projective limit the exact sequence
\begin{equation}\label{f:exseq2}
     1 \longrightarrow \mu_{p-1} \times G^{\mathrm{ab}} \times SK_1(\Lambda(G)) \longrightarrow K_1(\Lambda(G)) \xrightarrow{\; \Gamma \;} \Lambda(G)/\overline{[\Lambda(G),\Lambda(G)]} \xrightarrow{\; \omega \;} G^{\mathrm{ab}} \longrightarrow 1 \ .
\end{equation}
In Cor.\ \ref{SK} we will see that $SK_1(\Lambda(G))$ coincides with the kernel of the natural map from $K_1(\Lambda(G))$ to $K_1(\Lambda^\infty(G))$. We assume from now on that $G$ has the following property.\\

\noindent \textbf{Hypothesis (SK):} $SK_1(\Lambda(G)) = 0$.\\

Our second basic exact sequence now is
\begin{equation}\label{f:exseq2}
     1 \longrightarrow \mu_{p-1} \times G^{\mathrm{ab}} \longrightarrow K_1(\Lambda(G)) \xrightarrow{\; \Gamma \;} \Lambda(G)/\overline{[\Lambda(G),\Lambda(G)]} \xrightarrow{\; \omega \;} G^{\mathrm{ab}} \longrightarrow 1 \ .
\end{equation}
One easily checks that $\Gamma \circ \exp(p.) = p - \Phi$ holds true. Hence \eqref{f:exseq} and \eqref{f:exseq2} combine into the commutative exact diagram
\begin{equation*}
    \xymatrix{
       &  & 1 \ar[d] &  &  \\
       &  & \mu_{p-1} \times G^{\mathrm{ab}} \ar[d] \ar[r]^{=} & \mu_{p-1} \times G^{\mathrm{ab}} \ar[d]  &  \\
       0  \ar[r] & \Lambda(G)^{\mathrm{ab}}  \ar[d]_{=} \ar[r]^{\exp(p.)} & K_1(\Lambda(G)) \ar[d]_{\Gamma} \ar[r] & K_1(\Omega(G)) \ar[d] \ar[r] & 0  \\
       & \Lambda(G)^{\mathrm{ab}}  \ar[r]^{p-\Phi} & \Lambda(G)^{\mathrm{ab}} \ar[d]_{\omega} \ar[r] & \Lambda(G)/(p-\Phi)\Lambda(G) + \overline{[\Lambda(G),\Lambda(G)]}  \ar[d]_{\omega} \ar[r] & 0 \\
       &  & G^{\mathrm{ab}} \ar[d] \ar[r]^{=} & G^{\mathrm{ab}} \ar[d]  & \\
       &  & 1  & 1  &    }
\end{equation*}
where we have abbreviated $\Lambda(G)^{\mathrm{ab}} := \Lambda(G)/\overline{[\Lambda(G),\Lambda(G)]}$. Next we study the endomorphism $p-\Phi$ of $\Lambda(G)^{\mathrm{ab}}$. It is convenient to do this is an axiomatic framework.

Let $X$ be any compact topological space together with a continuous map $\Psi : X \longrightarrow X$ which satisfies
\begin{itemize}
  \item[--] $\Psi$ is injective,
  \item[--] $\Psi^n(X)$ is open (and closed) in $X$ for any $n \geq 1$, and
  \item[--] $\bigcap_{n \geq 1} \Psi^n(X) = \{x_0\}$ is a one element subset.
\end{itemize}
It follows that
\begin{itemize}
  \item[--] $\Psi(x_0) = x_0$, and
  \item[--] $X \setminus \{x_0\} = \bigcup_{n \geq 0} \Psi^n(X) \setminus \Psi^{n+1}(X)$ is a disjoint decomposition into open and closed subsets.
\end{itemize}
We let $C(X,\mathbb{Z}_p)$ denote the $\mathbb{Z}_p$-module of all $\mathbb{Z}_p$-valued continuous functions on $X$, and we put $\mathbb{Z}_p[[X]] := \Hom_{\mathbb{Z}_p} (C(X,\mathbb{Z}_p),\mathbb{Z}_p)$. The map $\Psi$ induces by functoriality endomorphisms $\Psi^\ast$ and $\Psi_\ast$ of $C(X,\mathbb{Z}_p)$ and $\mathbb{Z}_p[[X]]$, respectively. We claim that the map
\begin{equation*}
    C(\Psi(X),\mathbb{Z}_p) \oplus \ker(\Psi^\ast - p) \xrightarrow{\; \cong \;} C(X,\mathbb{Z}_p)
\end{equation*}
which on the first, resp.\ second, summand is the extension by zero, resp.\ the inclusion, is an isomorphism. For the injectivity we note that any $f \in \ker(\Psi^\ast - p)$ satisfies $f(\Psi^n(x)) = p^nf(x)$ for any $x \in X$ and any $n \geq 1$; if, in addition, $f|X \setminus \Psi(X) = 0$ it follows that necessarily $f = 0$. To see the surjectivity we first introduce, for any continuous function $g : X \setminus \Psi(X) \longrightarrow \mathbb{Z}_p$ the function
\begin{align*}
    g^\sharp : X & \longrightarrow \mathbb{Z}_p \\
    x & \longmapsto
    \begin{cases}
    p^ng(y) & \textrm{if $x = \Psi^n(y) \not\in \Psi^{n+1}(X)$,} \\
    0 & \textrm{if $x = x_0$ \ .}
    \end{cases}
\end{align*}
By construction $g^\sharp$ is continuous, satisfies $g^\sharp| X \setminus \Psi(X) = g$, and lies in $\ker(\Psi^\ast - p)$. If now $f \in C(X,\mathbb{Z}_p)$ is an arbitrary function we put $g := f|X \setminus \Psi(X)$ and obtain a decomposition $f = (f - g^\sharp) + g^\sharp$ as claimed.

The above splitting, combined with the canonical splitting
\begin{equation*}
    C(\Psi(X),\mathbb{Z}_p) \oplus C(X\setminus\Psi(X),\mathbb{Z}_p) \xrightarrow{\; \cong \;}
    C(X,\mathbb{Z}_p),
\end{equation*}
gives rise to an isomorphism
\begin{equation}\label{f:splitting}
         \ker(\Psi^\ast - p)\cong C(X\setminus\Psi(X),\mathbb{Z}_p) \ ,
\end{equation}
which is nothing else than the inclusion followed by the restriction
map.

Moreover, the map $C(X,\mathbb{Z}_p) \xrightarrow{\; \Psi^\ast \;}
C(X,\mathbb{Z}_p)$ is surjective (to obtain a preimage of $f \in
C(X,\mathbb{Z}_p)$ extend the function $f \circ \Psi^{-1}$ on
$\Psi(X)$ by zero to $X$). For any given $f \in C(X,\mathbb{Z}_p)$
we set $g_0 := f$ and choose inductively, for any $n \geq 0$, a
$g_{n+1} \in C(X,\mathbb{Z}_p)$ such that $\Psi^\ast(g_{n+1}) =
g_n$. Setting $g := \sum_{n \geq 1} p^{n-1}g_n$ we obtain $f =
\Psi^\ast(g) -pg$.  This shows that the map $C(X,\mathbb{Z}_p)
\xrightarrow{\; \Psi^\ast - p \;} C(X,\mathbb{Z}_p)$ is surjective.
It is even split-surjective since
\begin{align*}
    C(X,\mathbb{Z}_p) & \longrightarrow \ker(\Psi^\ast - p) \\
    g & \longmapsto (g|X\setminus\Psi(X))^\sharp
\end{align*}
is a projector onto its kernel.

Dually we then obtain the split-injectivity of the map
$\mathbb{Z}_p[[X]] \xrightarrow{\; \Psi_\ast - p \;}
\mathbb{Z}_p[[X]]$ and the direct sum decomposition
\begin{eqnarray*}
    \mathbb{Z}_p[[X]] &=& \Hom_{\mathbb{Z}_p} ( \ker(\Psi^\ast - p),\mathbb{Z}_p)   \oplus (\Psi_\ast - p)\mathbb{Z}_p[[X]]  \\
    &=&\mathbb{Z}_p[[X \setminus \Psi(X)]] \oplus (\Psi_\ast - p)\mathbb{Z}_p[[X]] \ ,
\end{eqnarray*}
where we used the dual of \eqref{f:splitting} in the second
equation.

We apply this general consideration to the space $X :=
\mathcal{O}(G)$ of conjugacy classes in $G$ and the map $\Psi$
induced by $\phi(g) := g^p$ on $G$. Then
$\mathbb{Z}_p[[\mathcal{O}(G)]] = \Lambda(G)^{\mathrm{ab}}$ and
$\Psi_\ast = \Phi$.

\begin{lemma}
$\bigcap_{n \geq 1} \phi^n(G) = \{1\}$.
\end{lemma}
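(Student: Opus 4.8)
The plan is to establish the stronger assertion that $\bigcap_{n\ge 1}\phi^n(G)=\{1\}$ for \emph{any} pro-$p$ group $G$, so that the $p$-adic Lie structure plays no role. Observe first that $\phi^n(h)=h^{p^n}$ (an immediate induction: $\phi^{n+1}(h)=\phi(h^{p^n})=(h^{p^n})^p$), that $\phi^{n+1}(G)\subseteq\phi^n(G)$, and that, $G$ being compact, each $\phi^n(G)$ is a nonempty closed subset containing $1=\phi^n(1)$; so the intersection contains $1$, and it remains to show it contains nothing else.

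I would argue by contradiction. Suppose $g\in\bigcap_{n\ge 1}\phi^n(G)$ with $g\ne 1$. Since $G$ is profinite it is Hausdorff and admits a neighbourhood basis of $1$ consisting of open normal subgroups, so there is an open normal subgroup $N$ of $G$ with $g\notin N$. Because $G$ is pro-$p$, the finite quotient $G/N$ is a $p$-group, and since $g\notin N$ its order equals $p^j$ for some $j\ge 1$. Now $g\in\phi^j(G)$, hence $g=h^{p^j}$ for some $h\in G$; writing $x\mapsto\bar x$ for the projection $G\to G/N$ we obtain $\bar g=\bar h^{\,p^j}$. By Lagrange the order of $\bar h$ divides $|G/N|=p^j$, so $\bar h^{\,p^j}=1$ and therefore $\bar g=1$, i.e.\ $g\in N$ --- contradicting the choice of $N$. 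Thus $\bigcap_{n\ge 1}\phi^n(G)=\{1\}$.

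There is no genuine obstacle; the only points to keep straight are the identity $\phi^n(h)=h^{p^n}$, the fact that a finite continuous quotient of a pro-$p$ group is a $p$-group, and Lagrange's theorem in that quotient. If one prefers an argument making the $p$-adic Lie structure visible, an alternative runs as follows: fix an open uniform subgroup $U\subseteq G$ (which exists by \cite{DDMS}); under the coordinate isomorphism $\log:U\xrightarrow{\;\sim\;}\mathbb{Z}_p^{\dim G}$ the map $h\mapsto h^p$ corresponds to multiplication by $p$, so $\bigcap_n\phi^n(U)=\{1\}$ follows from $\bigcap_n p^n\mathbb{Z}_p^{\dim G}=\{0\}$, while the finite-$p$-group argument above, applied to an open normal subgroup $N\subseteq U$, shows that every element of the global intersection already lies in $U$. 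In either case, the lemma supplies precisely the last of the three bullet conditions needed to apply the general framework to $X=\mathcal{O}(G)$ with $\Psi$ the map induced by $\phi$, namely that $\bigcap_{n\ge 1}\Psi^n(\mathcal{O}(G))$ is the single conjugacy class of $1$.
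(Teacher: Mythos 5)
Your proof is correct, and it takes a genuinely different route from the paper's. The paper reduces to the structure theory of $p$-adic Lie groups: it invokes the fact that a finitely generated powerful pro-$p$ group has trivial intersection of its $p^n$-power subgroups (\cite{DDMS} Prop.\ 1.16(iii) and Thm.\ 3.6(iii)), passes to an open normal uniform subgroup $N \subseteq G$ via \cite{DDMS} Cor.\ 8.34, and concludes from the inclusion $\phi^{n+h}(G) \subseteq \phi^n(N)$ where $[G:N]=p^h$. Your argument bypasses all of this: you observe that $\phi^n(G)$ is exactly the set of $p^n$-th powers, and that for any open normal subgroup $N$ with $|G/N|=p^j$, every $p^j$-th power of $G$ dies in $G/N$ by Lagrange, so any element of $\bigcap_n \phi^n(G)$ lies in every open normal subgroup and is therefore trivial. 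This proves the lemma for an arbitrary pro-$p$ group, with no Lie-theoretic input whatsoever, and is both more elementary and strictly more general than the paper's proof; what the paper's approach ``buys'' is essentially nothing here beyond staying within the circle of \cite{DDMS} references it already uses to verify hypothesis ($\Phi$) for uniform groups. Your closing remark correctly identifies the role of the lemma, namely supplying the third bullet condition for $X=\mathcal{O}(G)$; the only caveat is that passing from $\bigcap_n\phi^n(G)=\{1\}$ to $\bigcap_n\Psi^n(\mathcal{O}(G))=\{[1]\}$ still uses the openness of the $\phi^n(G)$ from hypothesis ($\Phi$), exactly as the paper's subsequent sentence indicates.
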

\begin{proof}
If $G$ is finitely generated and powerful then our assertion holds true by \cite{DDMS} Prop.\ 1.16(iii) and Thm.\ 3.6(iii). But our general pro-$p$ $p$-adic Lie group $G$ contains an open normal subgroup $N$ which is uniform and hence finitely generated and powerful by \cite{DDMS} Cor.\ 8.34. Let $[G:N] = p^h$. Then $\phi^{n+h}(G) \subseteq \phi^n(N)$.
\end{proof}

It is easily verified that the space $\mathcal{O}(G)$ satisfies the above conditions provided we assume the following.\\

\noindent \textbf{Hypothesis ($\Phi$):} The map $\phi : G \longrightarrow G$ is injective, and
$\phi^n(G)$ is open in $G$ for any $n \geq 1$. \\

For example, any uniform $G$ satisfies this hypothesis by \cite{DDMS} Prop.\ 1.16(iii), Thm.\ 3.6(iii), and Lemma 4.10.

Henceforth assuming both (SK) and ($\Phi$) the above diagram therefore can be completed to the commutative exact diagram:
\begin{equation}\label{d:basic}
    \xymatrix{
       &  & 1 \ar[d] & 1 \ar[d] &  \\
       &  & \mu_{p-1} \times G^{\mathrm{ab}} \ar[d] \ar[r]^{=} & \mu_{p-1} \times G^{\mathrm{ab}} \ar[d]  &  \\
       0  \ar[r] & \Lambda(G)^{\mathrm{ab}}  \ar[d]_{=} \ar[r]^{\exp(p.)\ } & K_1(\Lambda(G)) \ar[d]_{\Gamma} \ar[r] & K_1(\Omega(G)) \ar[d] \ar[r] & 0  \\
       0 \ar[r] & \Lambda(G)^{\mathrm{ab}}  \ar[r]^{p-\Phi} & \Lambda(G)^{\mathrm{ab}} \ar[d]_{\omega} \ar[r] & \Lambda(G)/(p-\Phi)\Lambda(G) + \overline{[\Lambda(G),\Lambda(G)]}  \ar[d]_{\omega} \ar[r] & 0 \\
       &  & G^{\mathrm{ab}} \ar[d] \ar[r]^{=} & G^{\mathrm{ab}} \ar[d]  & \\
       &  & 1  & 1  &    }
\end{equation}
Moreover, the subgroup $\mathbb{Z}_p [[\mathcal{O}(G) \setminus \Psi(\mathcal{O}(G)) ]]
\subseteq \Lambda(G)^{\mathrm{ab}}$ provides a section for the lower short exact sequence. It follows that the subgroup
\begin{equation*}
    K_1^\Phi(\Lambda(G)) := \Gamma^{-1}( \mathbb{Z}_p [[\mathcal{O}(G) \setminus \Psi(\mathcal{O}(G)) ]]) \subseteq K_1(\Lambda(G))
\end{equation*}
provides a section for the upper short exact sequence, i.\ e., that the natural map
\begin{equation*}
    K_1^\Phi(\Lambda(G)) \xrightarrow{\; \cong \;} K_1(\Omega(G))
\end{equation*}
is an isomorphism.

In order to characterize the group $K_1^\Phi(\Lambda(G))$ in a
different way we make the following further assumption.\\

\noindent \textbf{Hypothesis (P):} $\phi(G)$ is a subgroup of $G$.\\

Then $\phi(G)$ necessarily is a normal subgroup and is open by
($\Phi)$. Let $[G:\phi(G)] = p^d$. We introduce the homomorphism
\begin{align*}
    \widetilde{\Phi} : K_1(\Lambda(G)) & \longrightarrow K_1(\Lambda(G))
    \\ x & \longmapsto \exp(p\Gamma(x))^{-1} x^p
\end{align*}
(thinking in terms of units we write the groups $K_1$
multiplicatively). The diagram
\begin{equation}\label{d:Phi}
    \xymatrix{
      0  \ar[r] & \Lambda(G)^{\mathrm{ab}}
      \ar[d]^{\Phi} \ar[r]^{\exp(p.)}
      & K_1(\Lambda(G))
      \ar[d]^{\widetilde{\Phi}} \ar[r] & K_1(\Omega(G))
      \ar[d]^{.^p} \ar[r] & 0 \\
      0 \ar[r] & \Lambda(G)^{\mathrm{ab}} \ar[r]^{\exp(p.)}
      & K_1(\Lambda(G)) \ar[r] & K_1(\Omega(G)) \ar[r] & 0 }
\end{equation}
is easily checked to be commutative, and we have the identity
\begin{equation}\label{f:Phi-Gamma}
    \Gamma \circ \widetilde{\Phi} = \Phi \circ \Gamma \ .
\end{equation}

On the other hand, as a consequence of \cite{Oli} Thm.\ 6.8 we have
the commutative diagram
\begin{equation}\label{d:modified-trace}
    \xymatrix{
      K_1(\Lambda(G)) \ar[d]_{N_{\Lambda(G)/\Lambda(\phi(G))}} \ar[r]^{\quad \Gamma_G} & \Lambda(G)^{\mathrm{ab}} \ar[d]^{\; tr'_{G/\phi(G)}} \\
      K_1(\Lambda(\phi(G))) \ar[r]^{\quad \Gamma_{\phi(G)}} & \Lambda(\phi(G))^{\mathrm{ab}}   }
\end{equation}
where the modified trace map $tr'_{G/\phi(G)} :
\Lambda(G)^{\mathrm{ab}} \longrightarrow
\Lambda(\phi(G))^{\mathrm{ab}}$ is the unique continuous
$\mathbb{Z}_p$-linear map which on group elements $g \in G$ is given
by
\begin{equation*}
    tr'_{G/\phi(G)}(g) :=
    \begin{cases}
    \sum_{i=1}^{p^{d-1}} h_i g^p h_i^{-1} & \textrm{if $g \not\in \phi(G)$,} \\
    \sum_{i=1}^{p^d} h_i g h_i^{-1} & \textrm{if $g \in \phi(G)$}
    \end{cases}
\end{equation*}
where in each case the $h_i$ run over a set of representatives for the left cosets of $\phi(G)<g>$ in $G$. We extend the above diagram \eqref{d:modified-trace} by the
canonical maps induced by the inclusion of groups $\phi(G) \subseteq
G$ to the commutative diagram:
\begin{equation}\label{d:N-tr'}
    \xymatrix{
      K_1(\Lambda(G)) \ar[d]_{N_{\Lambda(G)/\Lambda(\phi(G))}} \ar[r]^{\quad \Gamma_G} & \Lambda(G)^{\mathrm{ab}}
      \ar[d]^{\; tr'_{G/\phi(G)}} \\
      K_1(\Lambda(\phi(G))) \ar[d]_{can} \ar[r]^{\quad \Gamma_{\phi(G)}} & \Lambda(\phi(G))^{\mathrm{ab}}
      \ar[d]^{\; can} \\
      K_1(\Lambda(G)) \ar[r]^{\quad \Gamma_G} & \Lambda(G)^{\mathrm{ab}} }
\end{equation}
The left, resp.\ right, composed vertical endomorphism of
$K_1(\Lambda(G))$, resp.\ $\Lambda(G)^{\mathrm{ab}}$, will be
denoted by $N_G$, resp.\ $tr'_G$. Then
\begin{equation*}
    tr'_G(g) =
    \begin{cases}
    p^{d-1} g^p & \textrm{if $g \not\in \phi(G)$,} \\
    p^d g & \textrm{if $g \in \phi(G)$.}
    \end{cases}
\end{equation*}
Hence with respect to the decomposition
\begin{equation*}
    \Lambda(G)^{\mathrm{ab}} = \mathbb{Z}_p [[\mathcal{O}(G) \setminus \Psi(\mathcal{O}(G))
    ]] \oplus \mathbb{Z}_p [[\Psi(\mathcal{O}(G))]]
\end{equation*}
we have
\begin{equation*}
    tr'_G\ \textrm{restricted to}\
    \begin{cases}
    \mathbb{Z}_p [[\mathcal{O}(G) \setminus \Psi(\mathcal{O}(G)) ]] & = p^{d-1} \Phi , \\
    \mathbb{Z}_p [[\Psi(\mathcal{O}(G)) ]] & = p^d .
    \end{cases}
\end{equation*}

\begin{lemma}\label{modified-trace-eq}
We have
\begin{equation*}
    \mathbb{Z}_p [[\mathcal{O}(G) \setminus
\Psi(\mathcal{O}(G)) ]] = (\Lambda(G)^{\mathrm{ab}})^{tr'_G =
p^{d-1}\Phi} := \{ y \in \Lambda(G)^{\mathrm{ab}} : tr'_G(y) =
p^{d-1}\Phi(y) \}.
\end{equation*}
\end{lemma}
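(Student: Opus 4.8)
The plan is to read off both inclusions directly from the explicit description of $tr'_G$ on the two summands of
\begin{equation*}
    \Lambda(G)^{\mathrm{ab}} = \mathbb{Z}_p [[\mathcal{O}(G) \setminus \Psi(\mathcal{O}(G))]] \oplus \mathbb{Z}_p [[\Psi(\mathcal{O}(G))]]
\end{equation*}
recorded just before the lemma, combined with the injectivity of $\Phi - p = \Psi_\ast - p$ on $\Lambda(G)^{\mathrm{ab}} = \mathbb{Z}_p[[\mathcal{O}(G)]]$ established above (indeed in the stronger form of split-injectivity, via the dual of \eqref{f:splitting}). This last input applies because, under ($\Phi$) together with the Lemma asserting $\bigcap_{n\geq 1}\phi^n(G) = \{1\}$, the space $\mathcal{O}(G)$ is an instance of the axiomatic setup, with $\Psi_\ast = \Phi$.

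The inclusion ``$\subseteq$'' is immediate: if $y \in \mathbb{Z}_p[[\mathcal{O}(G) \setminus \Psi(\mathcal{O}(G))]]$, the displayed restriction formula gives $tr'_G(y) = p^{d-1}\Phi(y)$ outright, so $y$ lies in the right-hand fixed set. For ``$\supseteq$'' I would take an arbitrary $y \in \Lambda(G)^{\mathrm{ab}}$ and decompose it as $y = y_1 + y_2$ with $y_1 \in \mathbb{Z}_p[[\mathcal{O}(G) \setminus \Psi(\mathcal{O}(G))]]$ and $y_2 \in \mathbb{Z}_p[[\Psi(\mathcal{O}(G))]]$. Using the additivity of $tr'_G$ and of $\Phi$ together with the two restriction formulas one gets $tr'_G(y) = p^{d-1}\Phi(y_1) + p^d y_2$ and $p^{d-1}\Phi(y) = p^{d-1}\Phi(y_1) + p^{d-1}\Phi(y_2)$, so the equation $tr'_G(y) = p^{d-1}\Phi(y)$ becomes $p^d y_2 = p^{d-1}\Phi(y_2)$. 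Since $\Lambda(G)^{\mathrm{ab}}$ is $\mathbb{Z}_p$-torsionfree one may cancel $p^{d-1}$, obtaining $\Phi(y_2) = p y_2$, i.e.\ $y_2 \in \ker(\Phi - p)$; injectivity of $\Phi - p$ then forces $y_2 = 0$, and hence $y = y_1 \in \mathbb{Z}_p[[\mathcal{O}(G) \setminus \Psi(\mathcal{O}(G))]]$, as desired.

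I do not expect a genuine obstacle here: the computation is essentially forced once the pieces are lined up. The only points requiring care are the additivity of $\Phi$ (so that no information about where $\Phi(y_i)$ lands in the decomposition is needed), the torsionfreeness of $\Lambda(G)^{\mathrm{ab}} = \mathbb{Z}_p[[\mathcal{O}(G)]]$ that legitimizes cancelling $p^{d-1}$, and the fact that $\mathcal{O}(G)$ genuinely satisfies the three axioms, which is what allows us to invoke the (split-)injectivity of $\Psi_\ast - p$ — the one place where any real content resides, and which has already been proved. An alternative, slightly slicker phrasing would note that $\mathbb{Z}_p[[\Psi(\mathcal{O}(G))]] = \Psi_\ast(\Lambda(G)^{\mathrm{ab}})$ meets $\ker(\Psi_\ast - p) = 0$ trivially and then run the same computation.
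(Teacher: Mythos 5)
Your argument is correct and follows the paper's own proof essentially verbatim: both read off the inclusion $\subseteq$ from the displayed restriction formulas for $tr'_G$, reduce the reverse inclusion to the equation $p^{d}y_2 = p^{d-1}\Phi(y_2)$ on the second summand, cancel $p^{d-1}$ using torsionfreeness, and conclude $y_2 = 0$ from the injectivity of $\Phi - p$ established via the axiomatic discussion of $\Psi_\ast - p$. No differences worth noting.
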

\begin{proof}
The above discussion shows that $\mathbb{Z}_p [[\mathcal{O}(G)
\setminus \Psi(\mathcal{O}(G)) ]]$ is contained in the kernel of
$tr'_G - p^{d-1}\Phi$. It also shows that it remains to establish
the vanishing of any $y \in \Lambda(G)^{\mathrm{ab}}$ such that
$p^{d-1}\Phi(y) = p^d y$. Since $\Lambda(G)^{\mathrm{ab}}$ is
torsion free this means that $\Phi(y) = py$. But we know the
injectivity of $\Phi - p$ from the diagram \eqref{d:basic}.
\end{proof}

Using Lemma \ref{modified-trace-eq} together with
\eqref{f:Phi-Gamma} and \eqref{d:N-tr'} we deduce that
\begin{equation*}
     K_1(\Lambda(G))^{N_G(.) = \widetilde{\Phi}(.)^{p^{d-1}}}
     := \{ x \in K_1(\Lambda(G)) : N_G(x) = \widetilde{\Phi}(x)^{p^{d-1}} \}
     \subseteq K_1^\Phi(\Lambda(G)) \ .
\end{equation*}

\begin{proposition}\label{norm-mod-p}
Let $H$ be an arbitrary pro-$p$-group and $N \subseteq H$ be an open normal subgroup; then the composed map
\begin{equation*}
    K_1(\Omega(H)) \xrightarrow{\; N_{\Omega(H)/\Omega(N)} \;} K_1(\Omega(N)) \xrightarrow{\; can \;} K_1(\Omega(H))
\end{equation*}
coincides with the map $x \longrightarrow x^{[H:N]}$.
\end{proposition}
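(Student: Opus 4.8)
The plan is to reduce to the case $[H:N]=p$ by an induction on $[H:N]$, and then to carry out an explicit matrix computation in $K_1(\Omega(H))$ which uses that $\Omega(H)$ has characteristic $p$. For the reduction, note that $H/N$ is a finite $p$-group, so there is an open normal subgroup $M$ of $H$ with $N\subseteq M$ and $[H:M]=p$; then $N$ is normal in $M$ and $[M:N]=[H:N]/p$. Since both $can$ and the transfer are transitive along $\Omega(N)\subseteq\Omega(M)\subseteq\Omega(H)$ (restriction, resp.\ extension, of scalars composes), the map in question for $(H,N)$ equals $can_{M\to H}\circ\bigl(can_{N\to M}\circ N_{\Omega(M)/\Omega(N)}\bigr)\circ N_{\Omega(H)/\Omega(M)}$. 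Assuming the statement for $(M,N)$ by induction and for $(H,M)$ as the base case, and using that $can_{M\to H}$ is a group homomorphism (hence commutes with raising to the $[M:N]$-th power), one obtains the claim for $(H,N)$. So from now on $[H:N]=p$.

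Fix $t\in H\setminus N$; then $s:=t^{p}\in N$, conjugation by $t$ is an automorphism $\alpha$ of $\Omega(N)$, and $\Omega(H)=\bigoplus_{m=0}^{p-1}\Omega(N)t^{m}$ is $\Omega(N)$-free of rank $p$. For $u\in\Omega(H)^{\times}$ write $u=\sum_{m=0}^{p-1}u_{m}t^{m}$ with $u_{m}\in\Omega(N)$. Then $can\bigl(N_{\Omega(H)/\Omega(N)}(u)\bigr)$ is the class in $K_{1}(\Omega(H))$ of the matrix $B\in GL_{p}(\Omega(N))\subseteq GL_{p}(\Omega(H))$ describing left multiplication by $u$ in the basis $1,t,\dots,t^{p-1}$; a short computation gives $B_{jk}=\alpha^{-j}\bigl(u_{(j-k)\bmod p}\bigr)$ if $j\ge k$ and $B_{jk}=\alpha^{-j}\bigl(u_{(j-k)\bmod p}\bigr)s$ if $j<k$. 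Now conjugate $B$ by $T:=\mathrm{diag}(1,t,\dots,t^{p-1})\in GL_{p}(\Omega(H))$, which does not change the class in $K_{1}(\Omega(H))$. Using $t^{j}\alpha^{-j}(a)=at^{j}$, $s=t^{p}$, and the fact that powers of $t$ commute, one checks that $TBT^{-1}$ is the circulant matrix $\sum_{m=0}^{p-1}v_{m}P^{m}$, where $v_{m}:=u_{m}t^{m}$ (so that $\sum_{m}v_{m}=u$) and $P\in GL_{p}(\mathbb{F}_{p})$ is the cyclic permutation matrix of order $p$.

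It remains to show $\bigl[\sum_{m=0}^{p-1}v_{m}P^{m}\bigr]=[u]^{p}$ in $K_{1}(\Omega(H))$, and here characteristic $p$ enters: $(P-1)^{p}=P^{p}-1=0$, so $\pi:=P-1$ is nilpotent with $\pi^{p}=0$ and, as an $\mathbb{F}_{p}$-endomorphism of $\mathbb{F}_{p}^{p}$, it is a single nilpotent Jordan block. Hence the $\im(\pi^{l})\subseteq\mathbb{F}_{p}^{p}$ form a complete flag, and $F^{l}:=\pi^{l}\Omega(H)^{p}=\Omega(H)\otimes_{\mathbb{F}_{p}}\im(\pi^{l})$ is an $\Omega(H)$-free direct summand of $\Omega(H)^{p}$ with each quotient $F^{l}/F^{l+1}$ free of rank one. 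The matrix $g(P):=\sum_{m}v_{m}P^{m}$, where $g(X)=\sum_{m}v_{m}X^{m}$, is built from the central scalar matrices $v_{m}\cdot 1$ and the $\mathbb{F}_{p}$-matrix $P$, so it commutes with $\pi$ and therefore preserves each $F^{l}$; and since $g(P)=g(1)\cdot 1+\pi\cdot(\cdots)$, the induced automorphism of $F^{l}/F^{l+1}\cong\Omega(H)$ is left multiplication by $g(1)=u$. Additivity of $K_{1}$ along this length-$p$ filtration then gives $[g(P)]=\prod_{l=0}^{p-1}[u]=[u]^{p}$, as desired.

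The main obstacle, and the key point of the argument, is the conjugation by $T=\mathrm{diag}(1,t,\dots,t^{p-1})$: this is what makes essential use of the non-commutativity of $\Omega(H)$, absorbing all the $\alpha$-twists in $B$ and turning the ``twisted companion'' matrix into an honest circulant, after which the characteristic-$p$ unipotence of $P$ closes the argument. (A naive reduction is not available, since the group elements of $H$ do not generate $K_{1}(\Omega(H))$.) One should also be careful to record that $F^{l}$ and its subquotients are $\Omega(H)$-free, so that the additivity of $K_{1}$ applies; alternatively the whole computation may first be carried out at each finite level $\mathbb{F}_{p}[H/U]$ and then passed to the projective limit.
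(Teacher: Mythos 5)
Your proof is correct and is essentially the paper's argument in different notation: your conjugation by $\mathrm{diag}(1,t,\dots,t^{p-1})$ performs exactly the untwisting that the paper achieves via the bimodule isomorphism $\Omega(H)\otimes_{\Omega(N)}\Omega(H)\cong\Omega(H)\otimes_{\mathbb{F}_p}\Omega(H/N)$, and your filtration by $\im\bigl((P-1)^{l}\bigr)$ plays the role of the paper's basis $1\otimes(gN-1)^{m}$, which triangularizes the matrix with $u$ on the diagonal using $(gN-1)^{p}=0$. Your inductive reduction to index $p$ likewise matches the paper's Step 2.
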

\begin{proof}
\textit{Step 1:} We assume that $[H:N] = p$. Let $x \in \Omega(H)^\times$ be an arbitrary element. Its image in $K_1(\Omega(H))$ under the asserted map can be obtained as follows. The $\Omega(H)$-bimodule $\Omega(H) \otimes_{\Omega(N)} \Omega(H)$ is free of rank $[H:N]$ as a left $\Omega(H)$-module. We choose any corresponding basis. Right multiplication by $x$ is a left $\Omega(H)$-linear endomorphism, and we may form the associated matrix with respect to the chosen basis. This matrix represents in $K_1(\Omega(H))$ the image of $x$ we are looking for. In order to make a clever choice for the basis we use the bimodule isomorphism
\begin{align*}
    \Omega(H) \otimes_{\Omega(N)} \Omega(H) & \xrightarrow{\; \cong \;} \Omega(H) \otimes_{\mathbb{F}_p} \Omega(H/N) = \Omega(H \times H/N) \\
    h_1 \otimes h_2 & \longmapsto (h_1h_2, h_2N)
\end{align*}
where $H$ acts on the right hand side from the left by left multiplication on the first factor and from the right by diagonal right multiplication. We also choose an element $g \in H$ such that the $1, g, \ldots, g^{p-1}$ are coset representatives for $N$ in $H$. If we write $x = \sum_{i=0}^{p-1} x_i g^i$ with $x_i \in \Omega(N)$ then the right multiplication by $x$ on $\Omega(H) \otimes_{\mathbb{F}_p} \Omega(H/N)$ is given by
\begin{equation*}
    (y \otimes z)x = \sum_{i=0}^{p-1} y x_i g^i \otimes z (gN)^i \ .
\end{equation*}
Obviously, $1 \otimes 1, 1 \otimes gN, \ldots, 1 \otimes (gN)^{p-1}$ is a basis of $\Omega(H) \otimes_{\mathbb{F}_p} \Omega(H/N)$ as a left $\Omega(H)$-module. But we use the elements $1 \otimes 1, 1 \otimes (gN - 1), \ldots, 1 \otimes (gN - 1)^{p-1}$ which also form a basis since the coefficients in the binomial equations $(gN - 1)^m = \sum_{j=0}^m \binom{m}{j} (-1)^{m-j} (gN)^j$ form an integral, triangular matrix with $1$ on the diagonal. For this basis we compute
\begin{align*}
    (1 \otimes (gN - 1)^m)x & = \sum_{i=0}^{p-1} x_i  g^i \otimes (gN - 1)^m (gN)^i \\
    & = \sum_{i=0}^{p-1} x_i g^i \otimes (gN - 1)^m((gN - 1) + 1)^i \\
    & = \sum_{i=0}^{p-1} \sum_{j \geq 0} x_i g^i \otimes (gN - 1)^m \binom{i}{j} (gN - 1)^j \\
    & = \sum_{j=0}^{p-1-m} \big( \sum_{i=0}^{p-1} \binom{i}{j} x_i g^i \big) \otimes (gN - 1)^{m+j} \\
    & \in x(1 \otimes (gN - 1)^m) + \sum_{k=m+1}^{p-1} \Omega(H) (1 \otimes (gN - 1)^k)
\end{align*}
where the last identity comes from the fact that $(gN - 1)^p = 0$. This shows that in this basis the matrix of right multiplication by $x$ on $\Omega(H) \otimes_{\mathbb{F}_p} \Omega(H/N)$ is triangular and has the element $x$ everywhere on the diagonal. Its class in $K_1(\Omega(H))$ therefore coincides with the class of $x^p$ (cf.\ \cite{Sri} p.\ 4/5).

\textit{Step 2:} In the general case we choose a sequence of normal subgroups $N = N_0 \subseteq N_1 \subseteq \ldots \subseteq N_r = H$ such that all indices satisfy $[N_i : N_{i-1}] = p$. The assertion now follows by applying the first step successively to the composite maps $can \circ N_{\Omega(N_1)/\Omega(N)},\ can \circ N_{\Omega(N_2)/\Omega(N_1)}, \ldots,\ can \circ N_{\Omega(H)/\Omega(N_{r-1})}$.
\end{proof}

\begin{proposition}\label{norm-equation}
$K_1^\Phi(\Lambda(G)) = K_1(\Lambda(G))^{N_G(.) = \widetilde{\Phi}(.)^{p^{d-1}}}$.
\end{proposition}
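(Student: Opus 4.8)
The plan is to prove the inclusion $K_1^\Phi(\Lambda(G)) \subseteq K_1(\Lambda(G))^{N_G(.) = \widetilde{\Phi}(.)^{p^{d-1}}}$, the reverse one having already been deduced above from Lemma \ref{modified-trace-eq}, \eqref{f:Phi-Gamma} and \eqref{d:N-tr'}. So let $x \in K_1^\Phi(\Lambda(G))$; we have to show that the element
\[
    z := N_G(x) \cdot \widetilde{\Phi}(x)^{-p^{d-1}} \in K_1(\Lambda(G))
\]
is trivial. The first step is to check that $z$ maps to $1$ under $K_1(\Lambda(G)) \to K_1(\Omega(G))$. Indeed, by the right hand column of \eqref{d:Phi} the operator $\widetilde{\Phi}$ becomes the $p$-th power map after reduction modulo $p$, so $\widetilde{\Phi}(x)^{p^{d-1}}$ maps to $\bar{x}^{p^d}$, where $\bar{x}$ denotes the image of $x$ in $K_1(\Omega(G))$. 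On the other hand $N_G = can \circ N_{\Lambda(G)/\Lambda(\phi(G))}$, and both of these maps commute with reduction modulo $p$ --- the transfer by the right hand square of \eqref{d:comdiag} applied to $H = \phi(G)$, the canonical map by functoriality of $K_1$ --- so $N_G(x)$ maps to the image of $\bar{x}$ under $can \circ N_{\Omega(G)/\Omega(\phi(G))}$; by Proposition \ref{norm-mod-p} with $H = G$ and $N = \phi(G)$ (which is open normal by ($\Phi$) and (P)) this is again $\bar{x}^{[G:\phi(G)]} = \bar{x}^{p^d}$. Hence $z$ maps to $\bar{x}^{p^d}\bar{x}^{-p^d} = 1$, and by the exactness of \eqref{f:exseq} we may write $z = \exp(py)$ for a unique $y \in \Lambda(G)^{\mathrm{ab}}$.

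It remains to see that $y = 0$, which I would extract by applying $\Gamma$. Since $\Gamma$ is a homomorphism into the additive group $\Lambda(G)^{\mathrm{ab}}$, using $\Gamma \circ \widetilde{\Phi} = \Phi \circ \Gamma$ from \eqref{f:Phi-Gamma} and $\Gamma_G \circ N_G = tr'_G \circ \Gamma_G$ from \eqref{d:N-tr'} we obtain
\[
    \Gamma(z) = tr'_G(\Gamma(x)) - p^{d-1}\Phi(\Gamma(x)) = (tr'_G - p^{d-1}\Phi)(\Gamma(x)) \ .
\]
Now $x \in K_1^\Phi(\Lambda(G)) = \Gamma^{-1}(\mathbb{Z}_p[[\mathcal{O}(G) \setminus \Psi(\mathcal{O}(G))]])$, and by Lemma \ref{modified-trace-eq} the submodule $\mathbb{Z}_p[[\mathcal{O}(G) \setminus \Psi(\mathcal{O}(G))]]$ is exactly the kernel of $tr'_G - p^{d-1}\Phi$; therefore $\Gamma(z) = 0$. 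On the other hand $\Gamma(z) = \Gamma(\exp(py)) = (p - \Phi)(y)$, and $p - \Phi$ is injective by diagram \eqref{d:basic}, so $y = 0$ and hence $z = \exp(0) = 1$, as required.

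The argument is essentially a chase through the diagrams already assembled, so I do not expect a serious obstacle; the one place that demands a little care is the reduction-modulo-$p$ computation in the first step, namely verifying that the transfer map $N_{\Lambda(G)/\Lambda(\phi(G))}$ and the canonical map are compatible with the projections $\Lambda \to \Omega$, so that Proposition \ref{norm-mod-p} can legitimately be brought to bear on the mod-$p$ behaviour of $N_G$.
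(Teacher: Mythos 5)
Your proposal is correct and follows essentially the same route as the paper's proof: both form the element $N_G(x)\widetilde{\Phi}(x)^{-p^{d-1}}$, show it dies under $\Gamma$ via Lemma \ref{modified-trace-eq}, \eqref{f:Phi-Gamma} and \eqref{d:N-tr'}, show it maps to $1$ in $K_1(\Omega(G))$ via \eqref{d:comdiag}, \eqref{d:Phi} and Proposition \ref{norm-mod-p}, and conclude from the injectivity of $p-\Phi$ in \eqref{d:basic}. The only difference is cosmetic ordering of the two vanishing checks.
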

\begin{proof}
Let $x \in K_1^\Phi(\Lambda(G))$ and put $y := N_G(x) \widetilde{\Phi}(x)^{-p^{d-1}}$. As a consequence of \eqref{f:Phi-Gamma} and \eqref{d:N-tr'} we have the commutative diagram:
\begin{equation*}
    \xymatrix{
      K_1(\Lambda(G)) \ar[d]_{N_G(.)\widetilde{\Phi}(.)^{-p^{d-1}}} \ar[r]^{\quad \Gamma} & \Lambda(G)^{\mathrm{ab}}
      \ar[d]^{\; tr'_G - p^{d-1}\Phi} \\
      K_1(\Lambda(G)) \ar[r]^{\quad \Gamma} & \Lambda(G)^{\mathrm{ab}}
       }
\end{equation*}
Lemma \ref{modified-trace-eq} therefore implies that $\Gamma(y) = 0$. Moreover, by \eqref{d:comdiag}, \eqref{d:Phi}, and Prop.\ \ref{norm-mod-p} (applied to $H := G$ and $N := \phi(G)$) we also have the commutative diagram:
\begin{equation*}
    \xymatrix{
      K_1(\Lambda(G)) \ar[d]_{N_G(.)\widetilde{\Phi}(.)^{-p^{d-1}}} \ar[r] & K_1(\Omega(G))
      \ar[d]^{\; 1} \\
      K_1(\Lambda(G)) \ar[r] & K_1(\Omega(G))
       }
\end{equation*}
Hence $y$ is mapped to $1 \in K_1(\Omega(G))$. Finally, as part of \eqref{d:basic} we have the commutative exact diagram:
\begin{equation*}
    \xymatrix{
       0  \ar[r] & \Lambda(G)^{\mathrm{ab}}  \ar[d]_{=} \ar[r]^{\exp(p.)\ } & K_1(\Lambda(G)) \ar[d]_{\Gamma} \ar[r] & K_1(\Omega(G))   \\
       0 \ar[r] & \Lambda(G)^{\mathrm{ab}}  \ar[r]^{p-\Phi} & \Lambda(G)^{\mathrm{ab}}   }
\end{equation*}
The element $y$ in the upper middle term has trivial image in both directions. It follows that necessarily $y = 1$, which means that $x \in K_1(\Lambda(G))^{N_G(.) = \widetilde{\Phi}(.)^{p^{d-1}}}$.
\end{proof}

At this point we have established the theorem stated in the introduction.

\section{The ring $\Lambda^\infty(G)$}

We now introduce for our pro-$p$ $p$-adic Lie group $G$ (with $p \neq 2$) the ring
\begin{equation*}
    \Lambda^\infty(G) := \varprojlim \mathbb{Q}_p[G/U]
\end{equation*}
with $U$ running again over all open normal subgroups of $G$. There
is an obvious unital ring monomorphism $\Lambda(G) \longrightarrow
\Lambda^\infty(G)$. The ring $\Lambda^\infty(G)$ in fact is of a rather simple nature. As the
projective limit of the semisimple finite group algebras
$\mathbb{Q}_p[G/U]$ it decomposes into the product
\begin{equation*}
    \Lambda^\infty(G) = \prod_\pi \mathcal{A}_\pi
\end{equation*}
of two sided ideals $\mathcal{A}_\pi$ where $\pi = [V]$ runs over the set $\Irr_{\mathbb{Q}_p}(G)$ of
isomorphism classes of all irreducible
$\mathbb{Q}_p$-representations $V$ of $G$ which are trivial on some
open subgroup. Each $\mathcal{A}_\pi$ is a matrix algebra over the
skew field $L_\pi := \End_{\mathbb{Q}_p[G]}(V)$. But since $G$ is
pro-$p$ the Schur indices of all its finite quotient groups are
trivial (cf.\ \cite{Roq}) This means that each $L_\pi$ is in fact a field
and is a finite extension of $\mathbb{Q}_p$ generated by some
$p$-power root of unity. In particular, $L_\pi$ does indeed only depend, up to unique isomorphism, on the class $\pi$ of $V$. We obtain the homomorphism
\begin{equation*}
    K_1(\Lambda^\infty(G)) \longrightarrow \prod_\pi K_1(\mathcal{A}_\pi)
    \cong \prod_\pi K_1(L_\pi) = \prod_\pi L_\pi^\times \ .
\end{equation*}
It is surjective since in the commutative diagram
\begin{equation*}
    \xymatrix{
      \Lambda^\infty(G)^\times \ar[d] \ar[r]^-{\cong} & \prod_\pi \mathcal{A}_\pi^\times \ar[d] \\
      K_1(\Lambda^\infty(G)) \ar[r] & \prod_\pi K_1(\mathcal{A}_\pi)   }
\end{equation*}
the right vertical map is surjective.

\begin{proposition}\label{directproduct}
The natural map $K_1(\Lambda^\infty(G)) \xrightarrow{\; \cong \;} \prod_\pi L_\pi^\times$ is an isomorphism.
\end{proposition}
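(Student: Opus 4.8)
The plan is to show the surjective homomorphism $K_1(\Lambda^\infty(G)) \to \prod_\pi L_\pi^\times$ is also injective, i.e.\ that its kernel is trivial. The essential point is that $\Lambda^\infty(G) = \prod_\pi \mathcal{A}_\pi$ is a (possibly infinite) product of matrix algebras over fields, so one would like to say that $K_1$ commutes with the product and that $K_1(\mathcal{A}_\pi) \cong L_\pi^\times$ by Morita invariance together with the classical fact $K_1$ of a field is its unit group. The subtlety, which I expect to be the main obstacle, is precisely that $K_1$ does \emph{not} in general commute with infinite direct products; the map $K_1(\prod_\pi \mathcal{A}_\pi) \to \prod_\pi K_1(\mathcal{A}_\pi)$ can fail to be injective. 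So the argument has to exploit additional structure coming from the fact that $\Lambda^\infty(G)$ is a \emph{projective limit} of the finite semisimple algebras $\mathbb{Q}_p[G/U]$, not merely an abstract product.

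First I would reduce to the level of finite quotients. For each open normal $U \subseteq G$, the algebra $\mathbb{Q}_p[G/U]$ is semisimple, hence a finite product of matrix algebras over fields (the $L_\pi$ with $\pi$ factoring through $G/U$, using the triviality of Schur indices as already noted), and therefore $K_1(\mathbb{Q}_p[G/U]) \xrightarrow{\cong} \prod_{\pi \text{ via } G/U} L_\pi^\times$ is an isomorphism by Morita invariance and $K_1$ of a field. Now I would use the isomorphism $K_1(\Lambda^\infty(G)) \cong \varprojlim_U K_1(\mathbb{Q}_p[G/U])$; this is of the same type as the isomorphism \eqref{f:projlim} already invoked for $\Lambda(G)$, and follows from \cite{FK} Prop.\ 1.5.1 (together with the continuity of $K_1$ under the relevant limits, noting that $\mathbb{Q}_p[G/U]$ are semisimple so there are no $SK_1$ or higher-$K$ obstructions to the transition maps being surjective). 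Granting this, one computes
\begin{equation*}
    K_1(\Lambda^\infty(G)) \cong \varprojlim_U K_1(\mathbb{Q}_p[G/U]) \cong \varprojlim_U \prod_{\pi \text{ via } U} L_\pi^\times = \prod_\pi L_\pi^\times \ ,
\end{equation*}
where the last equality holds because the indexing poset of pairs $(\pi, U)$ with $\pi$ factoring through $G/U$ is cofinal in the obvious sense: every $\pi$ appears for all sufficiently small $U$, and the transition maps in the limit are the projections. Finally one checks that this chain of identifications agrees with the natural map of the statement, which is immediate from the construction since both are induced by the component projections $\Lambda^\infty(G) \twoheadrightarrow \mathcal{A}_\pi$.

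The one place where care is genuinely needed is the middle isomorphism $K_1(\Lambda^\infty(G)) \cong \varprojlim_U K_1(\mathbb{Q}_p[G/U])$: a priori $\Lambda^\infty(G)$ is the projective limit of the rings $\mathbb{Q}_p[G/U]$ in the category of topological rings, and passing $K_1$ through such a limit is exactly the content of the cited result of Fukaya--Kato, which applies because each $\mathbb{Q}_p[G/U]$ is semilocal (indeed semisimple). Once that input is in place the rest is bookkeeping. I would present the proof in the order: (1) structure of $K_1(\mathbb{Q}_p[G/U])$; (2) the projective-limit description of $K_1(\Lambda^\infty(G))$; (3) identification of the limit with $\prod_\pi L_\pi^\times$ and matching with the natural map.
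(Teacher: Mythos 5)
You have correctly located the crux: since $\Lambda^\infty(G)=\prod_\pi\mathcal{A}_\pi$ is an infinite product, the map $K_1(\prod_\pi\mathcal{A}_\pi)\to\prod_\pi K_1(\mathcal{A}_\pi)$ is not formally injective, and some extra input is needed. But the resolution you propose does not close this gap. The cited result of Fukaya--Kato (Prop.\ 1.5.1) concerns \emph{adic} rings, i.e.\ rings $\Lambda\cong\varprojlim_n\Lambda/I^n$ with each $\Lambda/I^n$ \emph{finite}; the proof there crucially uses that the kernels $I^n$ are topologically nilpotent, so that $1+I^n$ shrinks to $1$ and the $\varprojlim$- and injectivity issues for $K_1$ can be controlled. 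None of this holds for $\Lambda^\infty(G)\to\mathbb{Q}_p[G/U]$: the quotients $\mathbb{Q}_p[G/U]$ are infinite, and the kernels are generated by central idempotents, hence are as far from topologically nilpotent as possible. Semisimplicity of the quotients is not a hypothesis under which that proposition applies. Moreover, the statement you want, $K_1(\Lambda^\infty(G))\cong\varprojlim_U K_1(\mathbb{Q}_p[G/U])$, is (given the elementary identification of the right-hand side with $\prod_\pi L_\pi^\times$) essentially \emph{equivalent} to the proposition being proved, so invoking it without an independent proof is circular: the injectivity of $K_1(\Lambda^\infty(G))\to\varprojlim_U K_1(\mathbb{Q}_p[G/U])$ is exactly the same difficulty as the injectivity you set out to establish, namely that an invertible matrix over $\Lambda^\infty(G)$ which dies in every component of $K_1$ actually lies in the commutator subgroup of $GL_n(\Lambda^\infty(G))$.

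What is genuinely needed, and what the paper supplies, is a \emph{uniform boundedness} argument. If $x=(x_\pi)_\pi\in GL_n(\Lambda^\infty(G))$ lies in the kernel, then each $x_\pi$ lies in $SL_{nm(\pi)}(L_\pi)=[GL_n(\mathcal{A}_\pi),GL_n(\mathcal{A}_\pi)]$; the problem is that a priori the number of commutators needed to express $x_\pi$ could grow with $\pi$, which would obstruct assembling them over the infinite product. The paper resolves this by Thompson's theorem: every element of $SL_m$ over a field is a \emph{single} commutator $[y_\pi,z_\pi]$ in $GL_m$. Setting $y=(y_\pi)_\pi$ and $z=(z_\pi)_\pi$ then gives $x=[y,z]$ in $GL_n(\Lambda^\infty(G))$, whence $x$ is trivial in $K_1$. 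Some input of this kind (a bound independent of $\pi$ on commutator or elementary-matrix length) is unavoidable, and your proposal as written does not contain it.
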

\begin{proof}
It remains to establish the injectivity of the map. Let $x$ be an element in its kernel. We may lift $x$ to an element in $GL_n(\Lambda^\infty(G))$, for a sufficiently big integer $n$, which we again denote by $x$. We write $x = (x_\pi)_\pi$ according to the decomposition $GL_n(\Lambda^\infty(G)) = \prod_\pi GL_n(\mathcal{A}_\pi)$. Let $\mathcal{A}_\pi = M_{m(\pi)}(L_\pi)$. Then the Morita invariance isomorphism reads
\begin{equation*}
    L_\pi^\times \xrightarrow{\cong} GL_{nm(\pi)}(L_\pi)/SL_{nm(\pi)}(L_\pi) = GL_n(\mathcal{A}_\pi)/ [GL_n(\mathcal{A}_\pi),GL_n(\mathcal{A}_\pi)] \xrightarrow{\cong} K_1(\mathcal{A}_\pi).
\end{equation*}
That $x$ lies in the kernel therefore means that, for any $\pi$, we have
\begin{equation*}
    x_\pi \in SL_{nm(\pi)}(L_\pi) = [GL_n(\mathcal{A}_\pi),GL_n(\mathcal{A}_\pi)].
\end{equation*}
By a result of Thompson (\cite{Tho}) any element in $SL_{nm(\pi)}(L_\pi)$ is a commutator. Hence we find $y_\pi, z_\pi \in GL_n(\mathcal{A}_\pi)$ such that $x_\pi = [y_\pi, z_\pi]$. We put $y := (y_\pi)_\pi$ and $z := (z_\pi)_\pi$ in $GL_n(\Lambda^\infty(G))$. It follows that $x = [y,z] \in [GL_n(\Lambda^\infty(G)), GL_n(\Lambda^\infty(G))]$ which means that $x$ maps to zero in $K_1(\Lambda^\infty(G))$.
\end{proof}

\begin{corollary}\label{SK}
$SK_1(\Lambda(G)) = \ker \big(K_1(\Lambda(G))
      \longrightarrow K_1(\Lambda^\infty(G)) \big)$.
\end{corollary}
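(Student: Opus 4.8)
The plan is to deduce the statement from the description of $K_1$ of both $\Lambda(G)$ and $\Lambda^\infty(G)$ as projective limits over the open normal subgroups $U$ of $G$, together with the left-exactness of the projective limit functor. Recall from \eqref{f:projlim} that $K_1(\Lambda(G)) \cong \varprojlim_U K_1(\mathbb{Z}_p[G/U])$ and that, by definition, $SK_1(\Lambda(G)) = \varprojlim_U SK_1(\mathbb{Z}_p[G/U])$ with $SK_1(\mathbb{Z}_p[G/U]) = \ker(K_1(\mathbb{Z}_p[G/U]) \to K_1(\mathbb{Q}_p[G/U]))$. The first point to check is that the natural map $K_1(\Lambda(G)) \to K_1(\Lambda^\infty(G))$, which is induced by the compatible family of ring homomorphisms $\mathbb{Z}_p[G/U] \to \mathbb{Q}_p[G/U]$, becomes, under a suitable identification, exactly the projective limit over $U$ of the maps $K_1(\mathbb{Z}_p[G/U]) \to K_1(\mathbb{Q}_p[G/U])$.

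The crux, and the step I expect to require the most care (a priori $K_1$ need not commute with projective limits of rings), is to identify $K_1(\Lambda^\infty(G))$ with $\varprojlim_U K_1(\mathbb{Q}_p[G/U])$ compatibly with the maps from $K_1(\Lambda(G))$. Here I would exploit the product decomposition $\Lambda^\infty(G) = \prod_\pi \mathcal{A}_\pi$: for each open normal $U$ the projection $\Lambda^\infty(G) \to \mathbb{Q}_p[G/U]$ is the projection onto the finitely many factors $\mathcal{A}_\pi = M_{m(\pi)}(L_\pi)$ with $\pi$ trivial on $U$, these being precisely the simple factors of the semisimple ring $\mathbb{Q}_p[G/U]$. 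Passing to $K_1$ and using Morita invariance, this is the projection $\prod_\pi L_\pi^\times \to \prod_{\pi : \pi|_U = 1} L_\pi^\times$ onto a subproduct. Since every $\pi \in \Irr_{\mathbb{Q}_p}(G)$ is trivial on some open normal subgroup, the transition maps in this system are (surjective) projections onto ever larger subproducts, so $\varprojlim_U \prod_{\pi:\pi|_U=1} L_\pi^\times = \prod_\pi L_\pi^\times$. Combined with Proposition \ref{directproduct}, which identifies $K_1(\Lambda^\infty(G))$ with $\prod_\pi L_\pi^\times$, this shows that the natural map $K_1(\Lambda^\infty(G)) \to \varprojlim_U K_1(\mathbb{Q}_p[G/U])$ is an isomorphism, and that the square formed by it, the identification \eqref{f:projlim}, and the two maps to $K_1$ of the $\mathbb{Q}_p$- resp.\ $\mathbb{Z}_p$-group rings commutes.

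It then remains to invoke left-exactness. For each $U$ one has the exact sequence $0 \to SK_1(\mathbb{Z}_p[G/U]) \to K_1(\mathbb{Z}_p[G/U]) \to K_1(\mathbb{Q}_p[G/U])$, and these form an inverse system of exact sequences in $U$. Applying $\varprojlim_U$ yields the exact sequence $0 \to \varprojlim_U SK_1(\mathbb{Z}_p[G/U]) \to \varprojlim_U K_1(\mathbb{Z}_p[G/U]) \to \varprojlim_U K_1(\mathbb{Q}_p[G/U])$, hence $\varprojlim_U SK_1(\mathbb{Z}_p[G/U]) = \ker\big(\varprojlim_U K_1(\mathbb{Z}_p[G/U]) \to \varprojlim_U K_1(\mathbb{Q}_p[G/U])\big)$. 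Rewriting the three terms by means of \eqref{f:projlim}, the definition of $SK_1(\Lambda(G))$, and the identification of the previous paragraph turns this into $SK_1(\Lambda(G)) = \ker\big(K_1(\Lambda(G)) \to K_1(\Lambda^\infty(G))\big)$, which is the assertion.
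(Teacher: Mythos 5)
Your proposal is correct and is precisely the argument the paper compresses into its one-line proof ("a consequence of \eqref{f:projlim} and Prop.\ \ref{directproduct}"): you identify $K_1(\Lambda^\infty(G))\cong\prod_\pi L_\pi^\times\cong\varprojlim_U K_1(\mathbb{Q}_p[G/U])$ via Proposition \ref{directproduct}, use \eqref{f:projlim} on the integral side, and apply left-exactness of $\varprojlim$ to the defining sequences of $SK_1(\mathbb{Z}_p[G/U])$. Nothing is missing; you have simply made explicit the details the authors leave to the reader.
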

\begin{proof}
This is a consequence of \eqref{f:projlim} and Prop.\ \ref{directproduct}.
\end{proof}

It leads to a more conceptual point of view if we  rewrite the isomorphism in Prop.\ \ref{directproduct} in the style of the so called Hom-description of Fr\"ohlich for finite groups. Let $\mathcal{G}_p := \mathrm{Gal}(\overline{\mathbb{Q}_p}/\mathbb{Q}_p)$ denote the absolute Galois group of the field $\mathbb{Q}_p$. Moreover, let $R_G$ denote the free abelian group on the set $\Irr_{\overline{\mathbb{Q}_p}}(G)$ of isomorphism classes $[V]$ of all irreducible $\overline{\mathbb{Q}_p}$-representations $V$ of $G$ which are trivial on some open subgroup. Then the map
\begin{equation}\label{f:hom}
\begin{split}
    K_1(\Lambda^\infty(G)) & \xrightarrow{\; \cong \;} \Hom_{\mathcal{G}_p} (R_G, \overline{\mathbb{Q}_p}^\times) \\
    [a] & \longmapsto \big[ [V] \mapsto {\det}_{\overline{\mathbb{Q}_p}}(a \cdot; V) \big],
\end{split}
\end{equation}
where the class $[a] \in K_1(\Lambda^\infty(G))$ is represented by a unit $a \in \Lambda^\infty(G)^\times$, is an isomorphism. This can easily be deduced from Prop.\ \ref{directproduct} (compare \cite{Tay} Chap.\ 1 for the case of a finite group). The group $G$ being compact any $\pi = [V]$ in $\Irr(G)$ contains a $G$-invariant lattice over the ring of integers $o_\pi \subseteq L_\pi$. The isomorphism in Prop.\ \ref{directproduct} therefore extends to a commutative diagram
\begin{equation*}
    \xymatrix{
      K_1(\Lambda(G)) \ar[d] \ar[r] & \prod_\pi o_\pi^\times \ar[d]^{\subseteq} \\
      K_1(\Lambda^\infty(G)) \ar[r]^-{\cong} & \prod_\pi L_\pi^\times .  }
\end{equation*}
In terms of the Hom-description this amounts to the commutative diagram
\begin{equation*}
    \xymatrix{
      K_1(\Lambda(G)) \ar[d] \ar[r]^-{\DET} & \Hom_{\mathcal{G}_p} (R_G, \overline{\mathbb{Z}_p}^\times) \ar[d]^{\subseteq} \\
      K_1(\Lambda^\infty(G)) \ar[r]^-{\cong} & \Hom_{\mathcal{G}_p} (R_G, \overline{\mathbb{Q}_p}^\times)  }
\end{equation*}
where $\overline{\mathbb{Z}_p}$ denotes the ring of integers in $\overline{\mathbb{Q}_p}$; the upper horizontal map henceforward will be denoted by $\DET$.

Additively we have the isomorphism
\begin{align*}
     \Lambda^\infty(G)^{\mathrm{ab}} := \Lambda^\infty(G)/\overline{[\Lambda^\infty(G),\Lambda^\infty(G)]} & \xrightarrow{\; \cong \;} \Hom_{\mathcal{G}_p} (R_G, \overline{\mathbb{Q}_p}) \\
     x & \longmapsto \big[ [V] \mapsto {\tr}_{\overline{\mathbb{Q}_p}}(x \cdot; V) \big]
\end{align*}
where the closure on the left hand side is formed with respect to the product topology on $\Lambda^\infty(G) \cong \prod_\pi \mathcal{A}_\pi$. For the same reason as before it induces a map $\TR: \Lambda(G)^{\mathrm{ab}} \longrightarrow \Hom_{\mathcal{G}_p} (R_G, \overline{\mathbb{Z}_p})$.

On $R_G$ we have the classical Adams operator $\psi^p$ which is characterized by the character identity
\begin{equation*}
    \tr(g; \psi^p[V]) = \tr(g^p;[V]) \qquad\textrm{for any $g \in G$}
\end{equation*}
(cf.\ \cite{CR} \S 12B). Its adjoints on $\Hom_{\mathcal{G}_p} (R_G,
\overline{\mathbb{Q}_p})$ and on $\Hom_{\mathcal{G}_p} (R_G,
\overline{\mathbb{Q}_p}^\times)$ as well as the corresponding (via
\eqref{f:hom}) operator on $K_1(\Lambda^\infty(G))$ will be denoted
by $\psi_p$ (compare \cite{CNT} for the case of a finite group).

The diagram
\begin{equation*}
    \xymatrix{
       \Lambda(G)^{\mathrm{ab}} \ar[d]_{\Phi} \ar[r] & \Lambda^\infty(G)^{\mathrm{ab}} \ar[d]_{\Phi} \ar[r] & \Hom_{\mathcal{G}_p} (R_G, \overline{\mathbb{Q}_p})  \ar[d]^{\psi_p} \\
       \Lambda(G)^{\mathrm{ab}} \ar[r] & \Lambda^\infty(G)^{\mathrm{ab}} \ar[r] & \Hom_{\mathcal{G}_p} (R_G, \overline{\mathbb{Q}_p})    }
\end{equation*}
is commutative. It suffices to check the latter on group elements where it is immediate from the definitions. Since the logarithm $\log : \overline{\mathbb{Z}_p}^\times \longrightarrow \overline{\mathbb{Q}_p}$ transforms the determinant into the trace we deduce the commutative diagram
\begin{equation*}
    \xymatrix{
      K_1(\Lambda(G)) \ar[d]_{\DET} \ar[r]^{\Gamma} & \Lambda(G)^{\mathrm{ab}} \ar[d]^{\TR} \\
      \Hom_{\mathcal{G}_p} (R_G, \overline{\mathbb{Z}_p}^\times) \ar[r]^{\Gamma_{\Hom}} & \Hom_{\mathcal{G}_p} (R_G, \overline{\mathbb{Q}_p})   }
\end{equation*}
where the map $\Gamma_{\Hom}$ is defined by
\begin{equation*}
    \Gamma_{\Hom}(f) := \frac{1}{p} \log \circ \frac{f^p}{\psi_p(f)}
    =\frac{1}{p}(p-\psi_p) (\log \circ f)  \ .
\end{equation*}
We now introduce the subgroup
\begin{equation*}
    \Hom_{\mathcal{G}_p}^{(1)} (R_G, \overline{\mathbb{Z}_p}^\times) := \{ f \in \Hom_{\mathcal{G}_p} (R_G, \overline{\mathbb{Z}_p}^\times) : \frac{f^p}{\psi_p(f)} \in \Hom_{\mathcal{G}_p} (R_G, 1 + p\overline{\mathbb{Z}_p}) \} \ .
\end{equation*}
On the one hand it is a result of Snaith (\cite{Sna} Thm.\ 4.3.10) that the image of $\DET$ lies in $\Hom_{\mathcal{G}_p}^{(1)} (R_G, 1 + p\overline{\mathbb{Z}_p})$. On the other hand $\log(1 + p\overline{\mathbb{Z}_p}) \subseteq p\overline{\mathbb{Z}_p}$. We therefore obtain the commutative diagram
\begin{equation}\label{d:DET}
    \xymatrix{
      K_1(\Lambda(G)) \ar[d]_{\DET} \ar[r]^{\Gamma} & \Lambda(G)^{\mathrm{ab}} \ar[d]^{\TR} \\
      \Hom_{\mathcal{G}_p}^{(1)} (R_G, \overline{\mathbb{Z}_p}^\times) \ar[r]^{ \Gamma_{\Hom}} & \Hom_{\mathcal{G}_p} (R_G, \overline{\mathbb{Z}_p}).   }
\end{equation}
It is easily seen that the operator $\psi_p$ respects the subgroup $\Hom_{\mathcal{G}_p}^{(1)} (R_G, \overline{\mathbb{Z}_p}^\times)$.

\begin{proposition}\label{adams}
The diagram
\begin{equation*}
    \xymatrix{
      K_1(\Lambda(G)) \ar[d]_{\widetilde{\Phi}} \ar[r]^-{\DET} & {\Hom}_{\mathcal{G}_p}^{(1)} (R_G, \overline{\mathbb{Z}_p}^\times) \ar[d]^{\psi_p} \\
      K_1(\Lambda(G)) \ar[r]^-{\DET} & {\Hom}_{\mathcal{G}_p}^{(1)} (R_G, \overline{\mathbb{Z}_p}^\times)   }
\end{equation*}
is commutative.
\end{proposition}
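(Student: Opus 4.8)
The plan is to prove the identity $\DET \circ \widetilde\Phi = \psi_p \circ \DET$ by reducing it to the two already-established relations governing $\widetilde\Phi$, namely the exponential sequence \eqref{d:Phi} together with \eqref{f:Phi-Gamma}, and to the corresponding structural facts on the target, namely the diagram \eqref{d:DET} and the compatibility $\TR \circ \Phi = \psi_p \circ \TR$ expressed in the displayed commutative diagram preceding \eqref{d:DET}. The key observation is that the group $\Hom_{\mathcal G_p}^{(1)}(R_G, \overline{\mathbb Z_p}^\times)$ is (up to a controlled error) detected simultaneously by its reduction mod $p$ and by $\Gamma_{\Hom}$, exactly as $K_1(\Lambda(G))$ is pinned down by the exact sequence \eqref{f:exseq} and the map $\Gamma$; so the strategy is to check that $\DET(\widetilde\Phi(x))$ and $\psi_p(\DET(x))$ have the same image under $\Gamma_{\Hom}$ and become equal modulo the kernel of the relevant detection map.

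First I would record the behaviour of $\widetilde\Phi$ under $\DET$ on the subquotient $\Lambda(G)^{\mathrm{ab}}$: by \eqref{d:DET} and \eqref{f:Phi-Gamma} one has $\Gamma_{\Hom}(\DET(\widetilde\Phi(x))) = \TR(\Gamma(\widetilde\Phi(x))) = \TR(\Phi(\Gamma(x))) = \psi_p(\TR(\Gamma(x))) = \psi_p(\Gamma_{\Hom}(\DET(x)))$, where the third equality is the compatibility of $\Phi$ with $\psi_p$ under $\TR$. On the other hand $\psi_p$ commutes with $\Gamma_{\Hom}$ by the very definition $\Gamma_{\Hom}(f) = \frac1p(p - \psi_p)(\log\circ f)$ since $\psi_p$ is $\mathbb Z_p$-linear on $\Hom_{\mathcal G_p}(R_G,\overline{\mathbb Q_p})$ and commutes with $p - \psi_p$ (they are polynomials in the single operator $\psi_p$), together with the fact that $\psi_p$ commutes with $\log\circ(\cdot)$ because on group elements $\psi_p$ permutes characters by $g \mapsto g^p$ and $\log$ is applied coordinatewise. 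Hence $\Gamma_{\Hom}(\psi_p(\DET(x))) = \psi_p(\Gamma_{\Hom}(\DET(x)))$ as well, and therefore $\DET(\widetilde\Phi(x))$ and $\psi_p(\DET(x))$ agree under $\Gamma_{\Hom}$.

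Next I would compare the two elements modulo $p$: reducing $\DET$ mod $p$ recovers the natural map $K_1(\Lambda(G)) \to K_1(\Omega(G))$ composed with the Hom-description of $K_1(\Omega(G))$, and under this reduction $\widetilde\Phi$ induces $x \mapsto x^p$ by the right-hand square of \eqref{d:Phi}, while $\psi_p$ induces on mod-$p$ characters the Frobenius-type operation $g \mapsto g^p$, which on $\mathbb F_p$-valued (indeed $\overline{\mathbb F_p}$-valued) determinants of units coincides with raising the determinant to the $p$-th power since taking $p$-th powers is the Frobenius endomorphism of $\overline{\mathbb F_p}$. So the two candidates also agree after reduction mod $p$, i.e.\ in $\Hom_{\mathcal G_p}(R_G, \overline{\mathbb F_p}^\times)$. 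It remains to argue that an element of $\Hom_{\mathcal G_p}^{(1)}(R_G,\overline{\mathbb Z_p}^\times)$ which is trivial both mod $p$ and under $\Gamma_{\Hom}$ is itself trivial: an element trivial mod $p$ lies in $\Hom_{\mathcal G_p}(R_G, 1 + p\overline{\mathbb Z_p})$, on which $\Gamma_{\Hom} = \frac1p(p-\psi_p)\circ\log$ is injective because $\log : 1 + p\overline{\mathbb Z_p} \xrightarrow{\cong} p\overline{\mathbb Z_p}$ is bijective and $p - \psi_p$ is injective on $\Hom_{\mathcal G_p}(R_G,\overline{\mathbb Q_p})$ — the latter being the Hom-description incarnation of the injectivity of $p - \Phi$ on $\Lambda^\infty(G)^{\mathrm{ab}}$, or directly: $\psi_p$ is topologically nilpotent-free in the relevant sense, so $(p - \psi_p)(v) = 0$ forces $v = 0$ by the eigenvalue/valuation argument already used for $\Psi_\ast - p$ in Section~2. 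Combining, $\DET(\widetilde\Phi(x)) = \psi_p(\DET(x))$.

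The main obstacle is the last injectivity step — making precise that the pair $(\text{reduction mod }p,\ \Gamma_{\Hom})$ is jointly injective on $\Hom_{\mathcal G_p}^{(1)}(R_G,\overline{\mathbb Z_p}^\times)$. One has to be slightly careful because $\Gamma_{\Hom}$ alone has a kernel (the image of $\mu_{p-1}\times G^{\mathrm{ab}}$-type torsion and the finite pieces), so the argument genuinely needs the mod-$p$ information to kill that kernel; the clean way is to observe that $\ker(\Gamma_{\Hom})$ consists of homomorphisms into roots of unity of finite order prime to $p$ and that such a homomorphism which is additionally trivial mod $p$ must be trivial, since the Teichmüller lift embeds $\mu_{p-1}(\overline{\mathbb Q_p})$ into $\overline{\mathbb Z_p}^\times$ compatibly with reduction mod $p$. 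Everything else is a bookkeeping of already-proved commutative diagrams, and I would present it as: apply $\Gamma_{\Hom}$ and reduce mod $p$, quote the two computations above, then invoke joint injectivity to conclude.
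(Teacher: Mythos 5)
Your overall strategy --- detecting the identity $\DET\circ\widetilde\Phi=\psi_p\circ\DET$ by the pair $(\Gamma_{\Hom},\ \text{reduction mod }p)$ and a joint injectivity claim --- is sound in outline, but it is a genuinely different and much longer route than the paper's, and two of your three auxiliary inputs are justified incorrectly or by appeal to structures the paper does not provide. The paper simply transports $\widetilde\Phi$ through $\DET$ to the map $\widetilde\Phi_{\Hom}(f)=(\exp\circ\, p\Gamma_{\Hom}(f))^{-1}f^p$ (using only \eqref{d:DET} and $\DET\circ\exp(p\,\cdot)=\exp\circ\, p\TR$) and then observes that on $\Hom_{\mathcal G_p}^{(1)}(R_G,\overline{\mathbb{Z}_p}^\times)$ one has $\exp(p\Gamma_{\Hom}(f))=\exp\log(f^p/\psi_p(f))=\psi_p(f)^{-1}f^p$ because $\exp\circ\log=\id$ on $1+p\overline{\mathbb{Z}_p}$; hence $\widetilde\Phi_{\Hom}=\psi_p$ identically and no injectivity argument is needed. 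Concerning your version: first, the congruence $\psi_p(f)\equiv f^p$ modulo $\Hom(R_G,1+p\overline{\mathbb{Z}_p})$ is \emph{not} a consequence of Frobenius on $\overline{\mathbb{F}_p}$ --- $\psi_p$ is precomposition with $\psi^p$ on $R_G$, and for a general unit $a$ the determinant of $a$ on $\psi^pV$ is not formally the $p$-th power of that on $V$; this congruence is exactly the defining condition of $\Hom^{(1)}$, and its validity for $\DET(x)$ is Snaith's theorem, a nontrivial input the paper quotes. Second, the paper establishes no Hom-description of $K_1(\Omega(G))$, so ``reducing $\DET$ mod $p$ recovers the map to $K_1(\Omega(G))$'' is not available; the congruence $\DET(\widetilde\Phi(x))\equiv\DET(x)^p$ should instead be read off from the formula $\widetilde\Phi(x)=\exp(p\Gamma(x))^{-1}x^p$, since $\exp(p\Gamma(x))$ is represented by a unit in $1+p\Lambda(G)$ whose determinants on $G$-invariant lattices lie in $1+p\overline{\mathbb{Z}_p}$. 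Third, the injectivity of $p-\psi_p$ on $\Hom_{\mathcal G_p}(R_G,\overline{\mathbb{Q}_p})$ is not literally the statement about $p-\Phi$ on $\Lambda(G)^{\mathrm{ab}}$ from Section~2 (different spaces, and that proof used the topology of $\mathcal O(G)$); it needs its own short argument, e.g.\ $\psi^{p^n}[V]=(\dim_{\overline{\mathbb{Q}_p}}V)\cdot 1_G$ for $n$ large forces $p^nh([V])$ to be eventually constant, whence $h([V])=0$. With these three repairs your argument does close (the ratio $u$ lies in $\Hom(R_G,1+p\overline{\mathbb{Z}_p})$, $\log u$ is killed by $p-\psi_p$, hence $u=1$ since $\log$ is injective there), but you should note that the single identity $\exp(p\Gamma_{\Hom}(f))=\psi_p(f)^{-1}f^p$, valid on all of $\Hom^{(1)}$, already proves the proposition outright and makes the entire detection apparatus unnecessary.
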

\begin{proof}
(We note that the definition of our map $\widetilde{\Phi}$ did not need any of our additional hypotheses on the group $G$.)
Introducing the map
\begin{align*}
    \widetilde{\Phi}_{\Hom} : \Hom_{\mathcal{G}_p}^{(1)} (R_G, \overline{\mathbb{Z}_p}^\times) & \longrightarrow \Hom_{\mathcal{G}_p} (R_G, \overline{\mathbb{Z}_p}^\times) \\
    f & \longmapsto (\exp \circ p \Gamma_{\Hom}(f))^{-1} f^p
\end{align*}
we obtain from \eqref{d:DET} the commutative diagram
\begin{equation*}
    \xymatrix{
      K_1(\Lambda(G)) \ar[d] \ar[r]^{\widetilde{\Phi}} & K_1(\Lambda(G)) \ar[d] \\
      \Hom_{\mathcal{G}_p}^{(1)} (R_G, \overline{\mathbb{Z}_p}^\times) \ar[r]^-{\widetilde{\Phi}_{\Hom}} & \Hom_{\mathcal{G}_p} (R_G, \overline{\mathbb{Z}_p}^\times).   }
\end{equation*}
But
\begin{equation*}
    \exp \circ p\Gamma_{\Hom}(f)) = \exp \circ \log \circ \frac{f^p}{\psi_p(f)} = \psi_p(f)^{-1} f^p
\end{equation*}
for any $f \in \Hom_{\mathcal{G}_p}^{(1)} (R_G, \overline{\mathbb{Z}_p}^\times)$ since $\exp \circ \log = \id$ on $1 + p\overline{\mathbb{Z}_p}$. It follows that $\widetilde{\Phi}_{\Hom} = \psi_p$.
\end{proof}

Next we turn to the norm map assuming again our hypothesis (P) that $\phi(G)$ is a subgroup of $G$. By a slight abuse of notation we let $N_G$ also denote the composed map
\begin{equation*}
    K_1(\Lambda^\infty(G)) \xrightarrow{\; N_{\Lambda^\infty(G)/\Lambda^\infty(\phi(G))} \;} K_1(\Lambda^\infty(\phi(G))) \xrightarrow{\; can \;} K_1(\Lambda^\infty(G)) \ .
\end{equation*}
This is justified by the identity $\Lambda^\infty(G) = \Lambda(G) \otimes_{\Lambda(\phi(G))} \Lambda^\infty(\phi(G))$ which implies the commutativity of the diagram
\begin{equation*}
    \xymatrix{
      K_1(\Lambda(G))
      \ar[d]_{N_G} \ar[r] & K_1(\Lambda^\infty(G))
      \ar[d]^{N_G}  \\
      K_1(\Lambda(G))
      \ar[r] & K_1(\Lambda^\infty(G)).
       }
\end{equation*}
We need to understand this map $N_G$ on $K_1(\Lambda^\infty(G))$ in terms of the Hom-description \eqref{f:hom}. The induction functor $\Ind_{\phi(G)}^G$ induces a map $R_{\phi(G)} \longrightarrow R_G$. Since $\phi(G)$ is normal in $G$ the composite map
\begin{equation*}
    \iota^p : R_G \xrightarrow{\; \textrm{restriction} \;} R_{\phi(G)} \xrightarrow{\; \textrm{induction} \;} R_G
\end{equation*}
is explicitly given by $\iota^p([V]) = \big[V \otimes_{\mathbb{Q}_p} \mathbb{Q}_p[G/\phi(G)]\big]$ with $G$ acting diagonally on the tensor product.

\begin{proposition}\label{norm}
The diagram
\begin{equation*}
    \xymatrix{
      K_1(\Lambda^\infty(G)) \ar[d]_{N_G} \ar[r]^-{\cong} & {\Hom}_{\mathcal{G}_p} (R_G, \overline{\mathbb{Q}_p}^\times) \ar[d]^{{\Hom}_{\mathcal{G}_p} (\iota^p, \overline{\mathbb{Q}_p}^\times)} \\
      K_1(\Lambda^\infty(G)) \ar[r]^-{\cong} & {\Hom}_{\mathcal{G}_p} (R_G, \overline{\mathbb{Q}_p}^\times)  }
\end{equation*}
is commutative.
\end{proposition}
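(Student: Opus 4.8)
The plan is to use the factorization $N_G = can\circ N_{\Lambda^\infty(G)/\Lambda^\infty(\phi(G))}$ and to identify each constituent on the level of Hom-descriptions. Since, by (P) and ($\Phi$), $\phi(G)$ is an open subgroup of $G$, it is again a pro-$p$ $p$-adic Lie group, so Proposition~\ref{directproduct} and \eqref{f:hom} supply an isomorphism $K_1(\Lambda^\infty(\phi(G)))\xrightarrow{\;\cong\;}\Hom_{\mathcal{G}_p}(R_{\phi(G)},\overline{\mathbb{Q}_p}^\times)$ of the same form, sending a unit $b$ to $[W]\mapsto\det_{\overline{\mathbb{Q}_p}}(b\cdot;W)$. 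By Proposition~\ref{directproduct} every class in $K_1$ of either ring is represented by a unit, so it is enough to trace a class $[a]$ with $a\in\Lambda^\infty(G)^\times$ (equivalently one may work with matrices in some $GL_n$; the argument is unchanged). The canonical map is then immediate: for $b\in\Lambda^\infty(\phi(G))^\times$ and $V\in\Irr_{\overline{\mathbb{Q}_p}}(G)$, the element $b$ acts on $V$ through its restriction to $\phi(G)$, so $\det_{\overline{\mathbb{Q}_p}}(b\cdot;V)=\det_{\overline{\mathbb{Q}_p}}(b\cdot;\operatorname{Res}^G_{\phi(G)}V)$; hence $can$ corresponds to precomposition with the restriction map $\operatorname{Res}^G_{\phi(G)}:R_G\to R_{\phi(G)}$.

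The substantive step is to show that the transfer $N_{\Lambda^\infty(G)/\Lambda^\infty(\phi(G))}$ corresponds to precomposition with induction $\Ind_{\phi(G)}^G:R_{\phi(G)}\to R_G$. Here I would use the identity $\Lambda^\infty(G)=\Lambda(G)\otimes_{\Lambda(\phi(G))}\Lambda^\infty(\phi(G))$, which exhibits $\Lambda^\infty(G)$ as free of rank $p^d=[G:\phi(G)]$ as a right $\Lambda^\infty(\phi(G))$-module on a set $g_1,\dots,g_{p^d}$ of coset representatives of $\phi(G)$ in $G$. By definition of the $K$-theoretic transfer, $N_{\Lambda^\infty(G)/\Lambda^\infty(\phi(G))}([a])$ is the class of the matrix $M_a=(c_{ij})\in GL_{p^d}(\Lambda^\infty(\phi(G)))$ determined by $ag_j=\sum_i g_i c_{ij}$. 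For $W\in\Irr_{\overline{\mathbb{Q}_p}}(\phi(G))$, the coset basis identifies $\Lambda^\infty(G)\otimes_{\Lambda^\infty(\phi(G))}W=\bigoplus_i g_i\otimes W$ with the standard model of the $\overline{\mathbb{Q}_p}[G]$-module $\Ind_{\phi(G)}^G W$, and under this identification left multiplication by $a$ goes over to the action of $a\in\Lambda^\infty(G)$ on $\Ind_{\phi(G)}^G W$. Computing the determinant of this endomorphism in the basis $\{g_i\otimes(\text{basis of }W)\}$ yields the determinant of the block matrix whose blocks are the images of the $c_{ij}$ in $\End_{\overline{\mathbb{Q}_p}}(W)$, which is exactly the value at $[W]$ of the image of $M_a$ under the Hom-description for $\phi(G)$. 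Hence $\det_{\overline{\mathbb{Q}_p}}(a\cdot;\Ind_{\phi(G)}^G W)$ equals that value, proving the claim.

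Composing the two identifications, $N_G$ corresponds to precomposition with $\Ind_{\phi(G)}^G\circ\operatorname{Res}^G_{\phi(G)}=\iota^p$ on $R_G$, which is precisely the commutativity of the displayed diagram; $\mathcal{G}_p$-equivariance is preserved throughout because restriction and induction are $\mathcal{G}_p$-equivariant on representation rings. The main obstacle is the bookkeeping in the transfer step: matching the left/right module conventions in the definition of $N_{\Lambda^\infty(G)/\Lambda^\infty(\phi(G))}$ with the standard model of $\Ind_{\phi(G)}^G W$, and checking that $\Lambda^\infty(G)\otimes_{\Lambda^\infty(\phi(G))}W$ genuinely reproduces the induced representation (only finite group algebras intervene here, since $W$ is trivial on an open subgroup). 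Since only determinants occur, transpose and block-ordering ambiguities are harmless and the computation is essentially forced.
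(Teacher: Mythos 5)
Your argument is correct and rests on the same underlying identity as the paper's, namely that the bimodule $\Lambda^\infty(G)$, free over $\Lambda^\infty(\phi(G))$ on coset representatives, converts the $K$-theoretic transfer into induction of representations; but the execution is genuinely different. You factor $N_G = can \circ N_{\Lambda^\infty(G)/\Lambda^\infty(\phi(G))}$, invoke the Hom-description \eqref{f:hom} a second time for the group $\phi(G)$, and identify the two constituents separately: $can$ as precomposition with $\operatorname{Res}^G_{\phi(G)}$ (immediate), and the transfer as precomposition with $\Ind_{\phi(G)}^G$ via the explicit coset-basis matrix $M_a=(c_{ij})$ acting on $\bigoplus_i g_i\otimes W \cong \Ind_{\phi(G)}^G W$. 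The paper instead never introduces $R_{\phi(G)}$: it treats the composed endofunctor $P \mapsto \Lambda^\infty(G)\otimes_{\Lambda^\infty(\phi(G))}P$ in a single basis-free step, rewriting $V^\ast\otimes_{\Lambda^\infty(G)}\Lambda^\infty(G)\otimes_{\Lambda^\infty(\phi(G))}P$ by associativity as $\Ind_{\phi(G)}^G(V)^\ast\otimes_{\Lambda^\infty(G)}P$ and then decomposing into irreducibles of $G$, so that the multiplicities $\dim\Hom_{\overline{\mathbb{Q}_p}[G]}(W,\Ind_{\phi(G)}^G V)$ produce $f\mapsto f\circ\iota^p$ directly. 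Your route buys concreteness (one sees the block matrix whose determinant is being taken, and the left/right module conventions are checked against the definition of the transfer in Section 1), at the cost of having to set up and trust the Hom-description for the auxiliary group $\phi(G)$ and to verify that $\Lambda^\infty(G)\otimes_{\Lambda^\infty(\phi(G))}W$ reproduces $\Ind_{\phi(G)}^G W$ (harmless, as you note, since everything factors through finite quotients); the paper's route is shorter and stays entirely inside $R_G$. Both are complete proofs.
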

\begin{proof}
The left vertical map $N_G$ is induced by the functor which sends a (left) finitely generated projective $\Lambda^\infty(G)$-module $P$ to $\Lambda^\infty(G) \otimes_{\Lambda^\infty(\phi(G))} P$. On the other hand, fix a class $[V] \in \Irr_{\overline{\mathbb{Q}_p}}(G)$. The corresponding component
\begin{align*}
    K_1(\Lambda^\infty(G)) & \longrightarrow K_1(\overline{\mathbb{Q}_p}) = \overline{\mathbb{Q}_p}^\times \\
    [a] & \longmapsto {\det}_{\overline{\mathbb{Q}_p}}(a \cdot; V)
\end{align*}
in \eqref{f:hom} is the composed map
\begin{equation*}
    K_1(\Lambda^\infty(G)) \longrightarrow K_1(\End_{\overline{\mathbb{Q}_p}}(V)) \xrightarrow{\; \cong \;} K_1(\overline{\mathbb{Q}_p})
\end{equation*}
where the left arrow is induced by the base change functor $P \longmapsto \End_{\overline{\mathbb{Q}_p}}(V) \otimes_{\Lambda^\infty(G)} P$ and the right Morita isomorphism by $Q \longmapsto V^\ast \otimes_{\End_{\overline{\mathbb{Q}_p}}(V)} Q$. Hence the composite is given by $P \longmapsto V^\ast \otimes_{\Lambda^\infty(G)} P$. Here $V^\ast := \Hom_{\overline{\mathbb{Q}_p}}(V,\overline{\mathbb{Q}_p})$ denotes the contragredient representation. Going through the left lower corner in the asserted diagram therefore comes from the functor which sends $P$ to
\begin{align*}
    V^\ast \otimes_{\Lambda^\infty(G)} \Lambda^\infty(G) \otimes_{\Lambda^\infty(\phi(G))} P & = V^\ast \otimes_{\Lambda^\infty(\phi(G))} P \\
    & = V^\ast \otimes_{\Lambda^\infty(\phi(G))} \Lambda^\infty(G) \otimes_{\Lambda^\infty(G)} P \\
    & = \Ind_{\phi(G)}^G (V)^\ast \otimes_{\Lambda^\infty(G)} P \\
    & = \bigoplus_{[W] \in \Irr_{\overline{\mathbb{Q}_p}}(G)} \Hom_{\overline{\mathbb{Q}_p}[G]}(W, \Ind_{\phi(G)}^G (V))  \otimes_{\overline{\mathbb{Q}_p}} (W^\ast \otimes_{\Lambda^\infty(G)} P).
\end{align*}
\end{proof}

Assuming (P) the above Prop.s \ref{adams} and \ref{norm} lead to the  isomorphism
\begin{equation}\label{f:infty-hom}
    K_1(\Lambda^\infty(G))^{N_G(.) = \psi_p(.)^{p^{d-1}}} \cong \Hom_{\mathcal{G}_p} (R_G/\im(\iota^p - p^{d-1}\psi^p), \overline{\mathbb{Q}_p}^\times) \ .
\end{equation}
induced by \eqref{f:hom} and, in particular, to the map
\begin{equation}\label{f:DET}
    K_1(\Lambda(G))^{N_G(.) = \widetilde{\Phi}(.)^{p^{d-1}}} \xrightarrow{\; \DET \;}
    \Hom_{\mathcal{G}_p}^{(1)} (R_G/\im(\iota^p - p^{d-1}\psi^p), \overline{\mathbb{Z}_p}^\times)
\end{equation}
where
\begin{multline*}
    \Hom_{\mathcal{G}_p}^{(1)} (R_G/\im(\iota^p - p^{d-1}\psi^p), \overline{\mathbb{Z}_p}^\times) := \Hom_{\mathcal{G}_p} (R_G/\im(\iota^p - p^{d-1}\psi^p), \overline{\mathbb{Z}_p}^\times)
    \cap \Hom_{\mathcal{G}_p}^{(1)} (R_G, \overline{\mathbb{Z}_p}^\times) \ .
\end{multline*}
Therefore, assuming (SK), ($\Phi$), and (P), and using \eqref{d:basic} and Cor.\ \ref{SK} the map \eqref{f:DET} embeds into the commutative exact diagram
\begin{equation}\label{d:DET-TR}
    \xymatrix{
      & 1 \ar[d] & 1 \ar[d] \\
      1 \ar[r] & \mu_{p-1} \times G^{\mathrm{ab}} \ar[r]^-{\DET} \ar[d] & \Hom_{\mathcal{G}_p} (R_G/\big(\im(\iota^p - p^{d-1}\psi^p) + \im(p - \psi^p) \big), \overline{\mathbb{Z}_p}^\times) \ar[d]^{\subseteq} \\
      1 \ar[r] & K_1(\Lambda(G))^{N_G(.) = \widetilde{\Phi}(.)^{p^{d-1}}} \ar[d]^{\Gamma} \ar[r]^-{\DET} & \Hom_{\mathcal{G}_p}^{(1)} (R_G/\im(\iota^p - p^{d-1}\psi^p), \overline{\mathbb{Z}_p}^\times) \ar[d]^{\Gamma_{\Hom}} \\
      1 \ar[r] &  \Lambda(G)^{\mathrm{ab}} \ar[r]^-{\TR} & \Hom_{\mathcal{G}_p} (R_G, \overline{\mathbb{Z}_p}).
          }
\end{equation}
\\

\textit{The example of the group $G = \mathbb{Z}_p$}: We recall from the introduction our choice $(\epsilon_n)_{n \geq 0}$ of compatible primitive $p^n$-th roots of unity. Let $\chi_n \in \Irr_{\overline{\mathbb{Q}_p}}(G)$ be the corresponding character of $G$ such that $\chi_n(1) = \epsilon_n$. The set $\{\chi_n\}_{n \geq 0}$ is a set of representatives for the $\mathcal{G}_p$-orbits in $\Irr_{\overline{\mathbb{Q}_p}}(G)$. It is straightforward to check that the map
\begin{align*}
    \Hom_{\mathcal{G}_p} (R_G/\im(\iota^p - \psi^p), \overline{\mathbb{Z}_p}^\times) & \xrightarrow{\; \cong \;} (\varprojlim O_n^\times) \times \mathbb{Z}_p^\times \\
    f & \longmapsto \big( (f(\chi_n))_{n \geq 1}, f(\chi_0) \big))
\end{align*}
is an isomorphism. As a consequence of Coleman's theorem we have the commutative diagram
\begin{equation*}
    \xymatrix{
      K_1(\Lambda(G))^{N_G = \widetilde{\Phi}} \ar[d]_{\DET} \ar[r]^-{\cong}_-{\mathrm{Col}} & \varprojlim O_n^\times  \\
      \Hom_{\mathcal{G}_p} (R_G/\im(\iota^p - \psi^p), \overline{\mathbb{Z}_p}^\times) \ar[r]^-{\cong} & (\varprojlim O_n^\times) \times \mathbb{Z}_p^\times .  \ar[u]_{{\rm pr}}  }
\end{equation*}
We, in particular, see that, for any $f:=\DET(\mu)$ in the image of $\DET$, the value $f(\chi_0)$
is already determined by  all the  other values $f(\chi_n), n\geq 1$. Indeed, from the well known
fact that
\begin{equation*}
    \frac{1}{[G:G_n]}\sum_{\chi\in \widehat{G/G_n}} \chi =\mathrm{char}_{G_n}
\end{equation*}
is the characteristic
function of  the subgroup $G_n:=G^{p^n}$,  $\widehat{G/G_n}$ denoting the character group of
$G/G_n$, and since
\begin{equation*}
    \DET(\mu)(\chi)=\int_G \chi d\mu \ ,
\end{equation*}
where we consider $\mu\in \Lambda(G)^\times \subseteq \Lambda(G)$ as a measure on $G$,
we obtain
\begin{equation*}
    f(\chi_0)=[G:G_n]\int_G \mathrm{char}_{G_n} d\mu - \sum_{\chi\in \widehat{G/G_n},
\chi\neq \chi_0} f(\chi) \ .
\end{equation*}
Letting $n$ pass to infinity, we arrive at
\begin{eqnarray*}
f(\chi_0)&=& - \lim_{n\rightarrow\infty}\sum_{\chi\in \widehat{G/G_n}, \chi\neq \chi_0} f(\chi)\\
            &=&- \sum_{n \geq 1} \mathrm{trace}_{\mathbb{Q}_p(\epsilon_n)/\mathbb{Q}_p}
            (f(\chi_n))
\end{eqnarray*}
due to the Galois invariance of $f.$ Note that the last series on the right hand side converges for
any $f$ in $\Hom_{\mathcal{G}_p} (R_G/\im(\iota^p - \psi^p),
\overline{\mathbb{Z}_p}^\times) $ (not necessarily in the image of $\DET)$ as a consequence of
\cite{Ser} III\S3 Prop.\ 7, IV\S1 Prop.\ 4, and IV\S4 Prop.\ 18. Moreover, for any such $f $ the commutativity of the above diagram implies that
\begin{eqnarray*}
\DET(\mathrm{Col}^{-1}((f(\chi_n))_{n\geq 1}))(\chi_0)&=&- \sum_{n \geq 1}
\mathrm{trace}_{\mathbb{Q}_p(\epsilon_n)/\mathbb{Q}_p}
            (\DET(\mathrm{Col}^{-1}((f(\chi_n))_{n\geq 1}))(\chi_n))\\
            &=&- \sum_{n \geq 1} \mathrm{trace}_{\mathbb{Q}_p(\epsilon_n)/\mathbb{Q}_p}
            (f(\chi_n)) \ .
\end{eqnarray*}
Hence  the map
\begin{equation*}
    f\mapsto - \sum_{n \geq 1} \mathrm{trace}_{\mathbb{Q}_p(\epsilon_n)/\mathbb{Q}_p}
            (f(\chi_n))
\end{equation*}
is multiplicative in $f$, a fact which  seems very surprising to us and which we were not able to show without using Coleman's result! Finally, consider the (surjective) homomorphism
\begin{align*}
    h:\Hom_{\mathcal{G}_p} (R_G/\im(\iota^p - \psi^p), \overline{\mathbb{Z}_p}^\times) & \longrightarrow \mathbb{Z}_p^\times \\
    f & \longmapsto \frac{f(\chi_0)}{- \sum_{n \geq 1} \mathrm{trace}_{\mathbb{Q}_p(\epsilon_n)/\mathbb{Q}_p}
            (f(\chi_n))} \ .
\end{align*}
The above discussion immediately implies that $f$ belongs to the image of $\DET$ if and only if
$h(f)=1$, i.e., the homomorphisms $f$ in the image of $\DET$ are precisely characterized by the additional
relation
\begin{equation*}
    f(\chi_0) = - \sum_{n \geq 1} \mathrm{trace}_{\mathbb{Q}_p(\epsilon_n)/\mathbb{Q}_p}
    (f(\chi_n)) \ .
\end{equation*}
Last but not least one checks that
\begin{equation*}
    \Hom_{\mathcal{G}_p}^{(1)} (R_G/\im(\iota^p - p^{d-1}\psi^p), \overline{\mathbb{Z}_p}^\times) = h^{-1}(1 + p\mathbb{Z}_p) \ .
\end{equation*}

We finish this section by a discussion of the upper horizontal arrow
\begin{equation*}
    \mu_{p-1} \times G^{\mathrm{ab}} \xrightarrow{\; \DET \;} \Hom_{\mathcal{G}_p} (R_G/\big(\im(\iota^p - p^{d-1}\psi^p) + \im(p - \psi^p) \big), \overline{\mathbb{Z}_p}^\times)
\end{equation*}
in the above diagram \eqref{d:DET-TR}. It is not difficult to see that already for the group $G = \mathbb{Z}_p^2$ the cokernel of this map is rather big. But, in fact, there is an intrinsic characterization of its image. Let $1_G \in R_G$ denote the class of the trivial representation.

\begin{remark}\label{torsion}
$R_G/\im(p - \psi^p)$ is a torsion group whose prime to $p$ part is $\mathbb{Z}/(p-1)\mathbb{Z} \cdot 1_G$.
\end{remark}
\begin{proof}
On the one hand we have $(p-1) \cdot 1_G = (p- \psi^p)1_G$. On the other hand let $[V] \in R_G$ be the class of an arbitrary representation $V$. Since some open subgroup of $G$ acts trivially on $V$ we find some integer $n \geq 0$ such that $\psi^{p^n}([V]) = \dim_{\overline{\mathbb{Q}_p}} V \cdot 1_G$.
\end{proof}

The tensor product of representations makes $R_G$ into a commutative ring with unit $1_G$. The augmentation is the ring homomorphism
\begin{align*}
    \alpha : R_G & \longrightarrow \mathbb{Z} \\
    [V] & \longmapsto \dim_{\overline{\mathbb{Q}_p}} V
\end{align*}
and the augmentation ideal $I_G := \ker(\alpha)$ is its kernel. We obviously have the additive decomposition
\begin{equation*}
    R_G = \mathbb{Z} \cdot 1_G \oplus I_G \ .
\end{equation*}
The exterior power operations on representations equip $R_G$ with the structure of a special $\lambda$-ring (cf.\ \cite{Sei}). As such $R_G$ carries the so called $\gamma$-filtration
\begin{equation*}
    R_G = R_{G,0} \supseteq I_G = R_{G,1} \supseteq R_{G,2} \supseteq \ldots \supseteq R_{G,i} \supseteq \ldots
\end{equation*}

\begin{lemma}\label{gamma-filtration}
 \begin{itemize}
      \item[i.] The map $\DET$ induces an isomorphism
       \begin{equation*}
        G^{\mathrm{ab}} \xrightarrow{\; \cong \;} \Hom_{\mathcal{G}_p} (R_G/(\mathbb{Z} \cdot 1_G \oplus R_{G,2}), \mu_{p^\infty}) =
        \Hom_{\mathcal{G}_p} (I_G/R_{G,2}, \mu_{p^\infty})
       \end{equation*}
       where $\mu_{p^\infty}$ denotes the group of all roots of unity of $p$-power order.
      \item[ii.] $\im(p - \psi^p) \subseteq (p-1)\mathbb{Z} \cdot 1_G \oplus R_{G,2}$.
 \end{itemize}
\end{lemma}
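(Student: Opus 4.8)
The plan is to prove (i) and (ii) separately, with (i) the substantial part. For (i), note first that $R_G=\mathbb{Z}\cdot 1_G\oplus I_G$ and $R_{G,2}\subseteq I_G$, so $R_G/(\mathbb{Z}\cdot 1_G\oplus R_{G,2})$ is canonically $I_G/R_{G,2}$. I would introduce the determinant homomorphism $\det\colon R_G\to\widehat{G^{\mathrm{ab}}}:=\Hom_{\mathrm{cont}}(G^{\mathrm{ab}},\mu_{p^\infty})$, sending $[V]$ to the character $\Lambda^{\dim V}V$ (its values are $p$-power roots of unity since $G$ is pro-$p$); this is a homomorphism from the additive group $R_G$ to the multiplicative group $\widehat{G^{\mathrm{ab}}}$ and is $\mathcal{G}_p$-equivariant. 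Using $\det(V\otimes W)=(\det V)^{\dim W}(\det W)^{\dim V}$, $\det(\Lambda^2 V)=(\det V)^{\dim V-1}$ and $\gamma^2(x)=\lambda^2(x)+x$, a direct computation shows that $\det$ annihilates $I_G^2$, the elements $\gamma^2(x)$ with $x\in I_G$, hence all of $R_{G,2}$, as well as $1_G$; so it induces a surjection $I_G/R_{G,2}\twoheadrightarrow\widehat{G^{\mathrm{ab}}}$ of $\mathcal{G}_p$-modules. The same computation applied to an element $g\in G^{\mathrm{ab}}\subseteq\Lambda(G)^\times$ (i.e. with $\det(g\,;-)$ in place of a global determinant) shows that $\DET(g)$ is trivial on $\mathbb{Z}\cdot 1_G$ and on $R_{G,2}$, so $\DET$ really does restrict to a map $G^{\mathrm{ab}}\to\Hom_{\mathcal{G}_p}(I_G/R_{G,2},\mu_{p^\infty})$, with $\DET(g)(\chi-1)=\chi(g)$ on a character $\chi$.

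Next I would invert the surjection $\det$. The assignment $\chi\mapsto\chi-1$ is a homomorphism $\widehat{G^{\mathrm{ab}}}\to I_G/R_{G,2}$ because $(\chi_1\chi_2-1)-(\chi_1-1)-(\chi_2-1)=(\chi_1-1)(\chi_2-1)\in I_G^2\subseteq R_{G,2}$, and it is a section of $\det$. The point is that it is also surjective, i.e. $I_G/R_{G,2}$ is generated by the classes $\chi-1$ of one‑dimensional representations; equivalently $[V]\equiv\dim V\cdot 1_G+(\det V-1)\pmod{R_{G,2}}$ for every $V$. I expect this to be the main obstacle. I would deduce it from the splitting principle for the $\gamma$-filtration: in a $\lambda$-ring extension in which $[V]$ becomes a sum $\ell_1+\cdots+\ell_{\dim V}$ of line elements the congruence reduces to $\sum_i(\ell_i-1)\equiv\prod_i\ell_i-1$ modulo squares, which telescopes from $(\ell-1)(\ell'-1)\in I^2$, together with the fact that the $\gamma$-filtration — in particular its first graded piece — is detected by such extensions. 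Alternatively, since $R_G=\varinjlim_U R_{G/U}$ compatibly with $\gamma$-filtrations and each $G/U$ is a finite $p$-group, one can induct on $|G/U|$, handling non‑faithful irreducibles by inflation from a proper quotient and using that $p$-groups are monomial. (The identification $I_G/R_{G,2}\cong\widehat{G^{\mathrm{ab}}}$ is in any case classical.) Granting it, $\widehat{G^{\mathrm{ab}}}\to I_G/R_{G,2}$ is an isomorphism of $\mathcal{G}_p$-modules.

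It then remains to identify $\DET$ with a duality isomorphism. The composite
\[
G^{\mathrm{ab}}\xrightarrow{\ \DET\ }\Hom_{\mathcal{G}_p}(I_G/R_{G,2},\mu_{p^\infty})\xrightarrow{\ \mathrm{restriction}\ }\Hom_{\mathcal{G}_p}(\widehat{G^{\mathrm{ab}}},\mu_{p^\infty})
\]
sends $g$ to $\chi\mapsto\DET(g)(\chi-1)=\chi(g)$, i.e. to the canonical pairing map. Since $G^{\mathrm{ab}}$ is a finitely generated $\mathbb{Z}_p$-module and $\mathcal{G}_p$ acts on $\widehat{G^{\mathrm{ab}}}$ only through the coefficients $\mu_{p^\infty}$, every $\mathcal{G}_p$-equivariant homomorphism $\widehat{G^{\mathrm{ab}}}\to\mu_{p^\infty}$ is $\mathbb{Z}$-linear and conversely, so this canonical map $G^{\mathrm{ab}}\to\Hom_{\mathcal{G}_p}(\widehat{G^{\mathrm{ab}}},\mu_{p^\infty})$ is an isomorphism by Pontryagin duality. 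As the restriction map above is an isomorphism too (because $\widehat{G^{\mathrm{ab}}}\to I_G/R_{G,2}$ is), $\DET$ is an isomorphism, which proves (i).

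For (ii) I would invoke the two classical facts that the Adams operator $\psi^p$ preserves the $\gamma$-filtration of $R_G$ and acts on $R_{G,n}/R_{G,n+1}$ as multiplication by $p^n$. Writing an irreducible $V$ as $[V]=\dim V\cdot 1_G+x$ with $x\in I_G=R_{G,1}$, we get $\psi^p([V])=\dim V\cdot 1_G+\psi^p(x)$ with $\psi^p(x)\in R_{G,1}$ and $\psi^p(x)\equiv px\pmod{R_{G,2}}$, hence
\[
(p-\psi^p)([V])=(p-1)\dim V\cdot 1_G+\bigl(px-\psi^p(x)\bigr)\in(p-1)\mathbb{Z}\cdot 1_G\oplus R_{G,2}.
\]
Since $p-\psi^p$ is additive and the $[V]$ form a $\mathbb{Z}$-basis of $R_G$, this gives $\im(p-\psi^p)\subseteq(p-1)\mathbb{Z}\cdot 1_G\oplus R_{G,2}$, i.e. (ii). The only nonformal ingredient here is the computation of $\psi^p$ on the graded pieces of the $\gamma$-filtration, which is standard $\lambda$-ring theory.
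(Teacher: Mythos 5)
Your argument is correct and is essentially the paper's: both hinge on the congruence $[V]\equiv\dim V\cdot 1_G+([\det V]-1_G)\bmod R_{G,2}$ (Atiyah, Lemma 12.7), the classical identification of $\widehat{G^{\mathrm{ab}}}$ with $I/I^2$ via $\chi\mapsto\chi-1$, and, for (ii), the standard fact that $p-\psi^p$ maps $I_G=R_{G,1}$ into $R_{G,2}$ (the paper cites the second lemma of [Sei] for exactly this). The only cosmetic difference is that you build the inverse isomorphism directly through the determinant character and spell out the Pontryagin-duality step, whereas the paper reduces to the abelian case and quotes the $I(\widehat{G})/I(\widehat{G})^2$ computation.
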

\begin{proof}
i. If $[V] \in R_G$ is the class of an arbitrary representation $V$, $m := \dim_{\overline{\mathbb{Q}_p}} V$, and $\det(V)$ denotes the maximal exterior power of $V$ (which is a character of $G^{\mathrm{ab}}$) then \cite{Ati} Lemma (12.7) implies that
\begin{equation*}
    [V] - m \cdot 1_G \equiv [\det(V)] - 1_G \mod R_{G,2} \ .
\end{equation*}
This shows that the natural map
\begin{equation*}
    I_{G^{\mathrm{ab}}}/R_{G^{\mathrm{ab}},2} \longrightarrow I_G/R_{G,2}
\end{equation*}
is surjective and reduces us to the case that the group $G =
G^{\mathrm{ab}}$ is abelian. In this case we have $R_{G,2} = I_G^2$
by \cite{Ati} Cor.\ (12.4). The representation ring $R_G$ becomes
the integral group ring $\mathbb{Z}[\widehat{G}]$ of the character
group $\widehat{G}$ of $G$. If $I(\widehat{G}) \subseteq
\mathbb{Z}[\widehat{G}]$ denotes the usual augmentation ideal then
it is well known that the map
\begin{align*}
    \widehat{G} & \longrightarrow I(\widehat{G})/I(\widehat{G})^2 \\
    \chi & \longmapsto \chi - 1 + I(\widehat{G})^2
\end{align*}
is an isomorphism (cf.\ \cite{Neu}  p.\ 48/49).

ii. We have $\psi^p1_G = 1_G$ and, by the second lemma in \cite {Sei}, $(p - \psi^p)I_G \subseteq R_{G,2}$.
\end{proof}

Using Remark \ref{torsion} and Lemma \ref{gamma-filtration} we conclude that
\begin{align*}
    \DET (\mu_{p-1} \times G^{\mathrm{ab}}) & = \Hom_{\mathcal{G}_p} (R_G/((p-1)\mathbb{Z} \cdot 1_G \oplus R_{G,2}), \overline{\mathbb{Z}_p}^\times) \\
    & = \mu_{p-1} \times \Hom_{\mathcal{G}_p} (I_G/R_{G,2}, \overline{\mathbb{Z}_p}^\times) \ .
\end{align*}

\section{Unipotent compact $p$-adic Lie groups}

We fix an integer $d \geq 2$. Inside the group
$GL_d(\mathbb{Q}_p)$ we consider
the Borel subgroup $B$ of lower triangular matrices. It satisfies $B
= TN$ with $T$ the diagonal matrices and $N$ the unipotent radical
of $B$. The unipotent compact $p$-adic Lie group which we will study
in this section is
\begin{equation*}
    G := N \cap GL_d(\mathbb{Z}_p) \ .
\end{equation*}
Let us recall right away the basic structural features of this group which will be used at several subsequent places. For any $d \geq i > j \geq 1$ and any $a \in \mathbb{Z}_p$ we introduce, as usual, the matrix $E_{ij}(a)$ with ones on the diagonal, the entry $a$ where the $i$th row and $j$th column intersect, and zeroes elsewhere. We also abbreviate $E_{ij} := E_{ij}(1)$. Then:
\begin{equation}\label{f:rootgroup}
    E_{ij}(a)E_{ij}(b) = E_{ij}(a+b) \ ;
\end{equation}
in particular, the matrix $E_{ij}$ is a topological generator of the ``integral'' root subgroup $G_{ij} := \{E_{ij}(a) : a \in \mathbb{Z}_p\} \cong \mathbb{Z}_p$. The basic commutation relations are:
\begin{equation}\label{f:commutation}
\begin{split}
    & [E_{ij}(a), E_{kl}(b)] = 1 \quad\textrm{if $i \neq l$ and $j \neq k$}, \\
    & [E_{ij}(a), E_{jl}(b)] = E_{il}(ab), \\
    & [E_{ij}(a), E_{ki}(b)] = E_{kj}(-ab);
    \end{split}
\end{equation}
in particular, $E_{ij}(a)$ is an $(i-j-1)$-fold iterated commutator. If $G^{(0)} := G$, $G^{(m)} := [G, G^{(m-1)}]$ denotes the descending central series of $G$ then the above relations imply the following list of properties:
\begin{itemize}
  \item[(a)] $G^{(m)} = \prod_{i-j > m} G_{ij}$ (set theoretically, and for any fixed total ordering of the roots $(i,j)$; in particular, $G^{(d-1)} = \{1\}$.
  \item[(b)] The matrices $E_{m+2,1}, E_{m+3,2}, \ldots, E_{d,d-(m+1)}$ generate $G^{(m)}$ topologically.
  \item[(c)] $G^{(m-1)}/G^{(m)}$ is the center of $G/G^{(m)}$.
\end{itemize}

\begin{proposition}
$SK_1(\mathbb{Z}_p[G(p^n)]) = 0$ where $G(p^n)$ denotes, for any $n \geq 1$, the image of $G$ in $GL_d(\mathbb{Z}/p^n\mathbb{Z})$.
\end{proposition}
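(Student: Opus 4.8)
The plan is to reduce the vanishing of $SK_1(\mathbb{Z}_p[G(p^n)])$ to a statement that can be fed into Oliver's criterion. Recall that $G$ is a finitely generated pro-$p$-group which is nilpotent of class $d-1$, with lower central series $G = G^{(0)} \supseteq G^{(1)} \supseteq \ldots \supseteq G^{(d-1)} = \{1\}$ whose successive quotients are finitely generated free $\mathbb{Z}_p$-modules; in particular each finite quotient $G(p^n)$ is a finite $p$-group which is nilpotent of class $\leq d-1$. For $p$-groups Oliver has characterized $SK_1(\mathbb{Z}_p[\pi])$ in terms of an obstruction living in a certain quotient of $H_2$, and there are inductive tools (centrally amalgamated products, central extensions) that compute it. The first step is therefore to recall the precise vanishing criterion from \cite{Oli}: for a finite $p$-group $\pi$ which is a quotient of a group built up by iterated central extensions with cyclic (or, more generally, controlled) kernels one has $SK_1(\mathbb{Z}_p[\pi]) = 0$. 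I would then want to exhibit $G(p^n)$ as such a group.

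The key step is an induction on $d$, or equivalently on the nilpotency class, using property (c) above: $G^{(m-1)}/G^{(m)}$ is central in $G/G^{(m)}$, and the same holds after reduction mod $p^n$. Thus $G(p^n)$ sits in a central extension
\begin{equation*}
    1 \longrightarrow Z_n \longrightarrow G(p^n) \longrightarrow \overline{G}(p^n) \longrightarrow 1
\end{equation*}
where $Z_n$ is the image of $G^{(d-2)}$ (the last nontrivial term, a free $\mathbb{Z}_p$-module, so $Z_n$ is a finite product of cyclic $p$-groups) and $\overline{G}(p^n)$ is the corresponding quotient of $G/G^{(d-2)}$, which is the analogous unipotent group for $d-1$ and hence, by the inductive hypothesis, has $SK_1 = 0$. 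Iterating, $G(p^n)$ is obtained from the trivial group by finitely many central extensions whose kernels are elementary-abelian-times-cyclic; feeding this tower into Oliver's results on the behaviour of $SK_1$ under central extensions (the relevant statements in \cite{Oli}, e.g. the computations via Dennis--Stein symbols and the $H_2$-obstruction, together with the explicit commutation relations \eqref{f:commutation}) should give $SK_1(\mathbb{Z}_p[G(p^n)]) = 0$.

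The main obstacle I anticipate is that $SK_1$ is \emph{not} in general well-behaved under arbitrary central extensions — there are well-known $p$-groups (for $p$ odd, class $2$ already can be delicate) with $SK_1 \neq 0$. So the extension structure alone is not enough; one has to use that the central kernel $Z_n$ is generated by the specific root elements $E_{d,1}, E_{d-1, \text{?}}, \ldots$ which, by \eqref{f:commutation}, are iterated commutators of elements lifting to $G(p^n)$. This ``commutator-of-liftable-elements'' condition is exactly what makes the relevant Dennis--Stein symbols trivial in Oliver's calculation, so the real content of the proof is to check that the generators of each central layer are honest iterated commutators in the whole group $G(p^n)$ (property (b) makes this explicit) and to match this up with the precise hypotheses of Oliver's vanishing theorem. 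A clean alternative, if available in \cite{Oli}, is to invoke directly that $SK_1(\mathbb{Z}_p[\pi]) = 0$ for any $\pi$ in the class generated under central extensions with kernel generated by commutators — which is precisely the class of unipotent groups by the above — thereby avoiding a layer-by-layer symbol computation. Either way, verifying that $G(p^n)$ lies in the right class, using (a), (b), (c) and \eqref{f:commutation}, is the heart of the matter.
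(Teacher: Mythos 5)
Your high-level framework is the same as the paper's: both reduce the statement to Oliver's isomorphism $SK_1(\mathbb{Z}_p[\overline{G}]) \cong H_2(\overline{G},\mathbb{Z})/H_2^{\mathrm{ab}}(\overline{G},\mathbb{Z})$ (\cite{Oli} Thm.\ 8.7) and both climb the lower central series of $\overline{G} := G(p^n)$. But there is a genuine gap at exactly the point you call ``the heart of the matter''. The sufficient condition you propose --- that each central kernel is generated by iterated commutators of liftable elements --- cannot be the right criterion: for \emph{every} finite $p$-group $\pi$ the lower central series exhibits $\pi$ as an iterated central extension whose kernels $\pi^{(m-1)}/\pi^{(m)}$, $m \geq 2$, are generated by commutators (and whose bottom layer $\pi/\pi^{(1)}$ is abelian), yet $SK_1(\mathbb{Z}_p[\pi])$ is nonzero for many finite $p$-groups --- otherwise hypothesis (SK) and this whole section would be vacuous. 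So the ``clean alternative'' you hope to find in \cite{Oli} does not exist, and the layer-by-layer computation cannot be bypassed.

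What actually has to be controlled, via the exact sequence $\overline{G}^{(m-1)}/\overline{G}^{(m)} \otimes \overline{G}/\overline{G}^{(1)} \xrightarrow{\gamma_m} H_2(\overline{G}/\overline{G}^{(m)},\mathbb{Z}) \to H_2(\overline{G}/\overline{G}^{(m-1)},\mathbb{Z}) \xrightarrow{\delta_{m-1}} \overline{G}^{(m-1)}/\overline{G}^{(m)}$ of \cite{Oli} Thm.\ 8.2, is that $\gamma_m\big(\ker(\delta_m \circ \gamma_m)\big)$ lifts to $H_2^{\mathrm{ab}}(\overline{G},\mathbb{Z})$. Computing this kernel explicitly from \eqref{f:commutation}, one finds, besides tensors $g \otimes h$ with $g,h$ genuinely commuting in $\overline{G}$ (which are harmless), the generators $A \otimes B + C \otimes D$ with $A = \overline{E}_{m+i+1,i+1}$, $B = \overline{E}_{i+1,i}$, $C = \overline{E}_{m+i,i}$, $D = \overline{E}_{m+i+1,m+i}$, where $[A,B] = [C,D]^{-1} = \overline{E}_{m+i+1,i} \neq 1$ for $m>1$. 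These are not reached by any ``commutator-of-liftable-elements'' principle; the paper disposes of them through the identity $A \wedge B + C \wedge D = AC \wedge BD - A \wedge D - C \wedge B$, in which all three pairs $(AC,BD)$, $(A,D)$, $(C,B)$ \emph{do} commute in $\overline{G}$ and hence lift to $H_2^{\mathrm{ab}}(\overline{G},\mathbb{Z})$. This combinatorial step, specific to the root-group relations, is the real content of the proof and is absent from your plan. A further small slip: $G/G^{(d-2)}$ is not the unipotent group for $GL_{d-1}$ (its dimension is $\binom{d}{2}-1$, not $\binom{d-1}{2}$), so the induction must run over the quotients $\overline{G}/\overline{G}^{(m)}$ of the fixed group rather than over unipotent groups of smaller general linear groups.
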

\begin{proof}
We fix $n$ and write $\overline{G} := G(p^n)$. More generally, we let $\overline{E}_{ij}$ and $\overline{G}^{(m)}$ denote the image of $E_{ij}$ and $G^{(m)}$, respectively, in $\overline{G}$. The commutation relations and their consequences recalled at the beginning of this section remain valid for these images in $\overline{G}$. In particular, $\overline{G}^{(m)}$ is the descending central series of $\overline{G}$, and we have the central extensions
\begin{equation*}
    1 \longrightarrow \overline{G}^{(m-1)}/\overline{G}^{(m)} \longrightarrow \overline{G}/\overline{G}^{(m)} \longrightarrow \overline{G}/\overline{G}^{(m-1)} \longrightarrow 1 \ .
\end{equation*}
For each $m$ there is the exact sequence (cf.\ \cite{Oli} Thm.\ 8.2)
\begin{equation*}
    \overline{G}^{(m-1)}/\overline{G}^{(m)} \otimes \overline{G}/\overline{G}^{(1)} \xrightarrow{\; \gamma_m \;} H_2(\overline{G}/\overline{G}^{(m)}, \mathbb{Z}) \longrightarrow H_2(\overline{G}/\overline{G}^{(m-1)}, \mathbb{Z}) \xrightarrow{\; \delta_{m-1} \;} \overline{G}^{(m-1)}/\overline{G}^{(m)} \ .
\end{equation*}
We see, in particular, that the image of the natural map
$H_2(\overline{G},\mathbb{Z}) \longrightarrow
H_2(\overline{G}/\overline{G}^{(m)},\mathbb{Z})$, for $m \geq 0$,
lies in the kernel of $\delta_m$. In order to recall the definition
of the map $\gamma_m$ we choose a free presentation $1
\longrightarrow R \longrightarrow F \longrightarrow
\overline{G}/\overline{G}^{(m)} \longrightarrow 1$ and use Hopf's
formula
\begin{equation*}
    H_2(\overline{G}/\overline{G}^{(m)},\mathbb{Z}) \cong \big(R \cap [F,F] \big)/[F,R] \ .
\end{equation*}
Then
\begin{equation*}
   \gamma_m(g\overline{G}^{(m)} \otimes h\overline{G}^{(1)}) :=
   [\tilde{g},\tilde{h}] \mod [F,R]
\end{equation*}
where, quite generally, we let $\tilde{g} \in F$ denote any lift of $g \in
\overline{G}/\overline{G}^{(m)}$. Following \cite{Oli} we let
$H_2^\textrm{ab}(\overline{G},\mathbb{Z})$ denote the sum of the images of the natural maps
$H_2(\overline{H},\mathbb{Z}) \longrightarrow H_2(\overline{G},\mathbb{Z})$ where $\overline{H}$
runs over all abelian subgroups of $\overline{G}$. In fact, in terms of Hopf's formula the subgroup
$H_2^\textrm{ab}(\overline{G}/\overline{G}^{(m)} ,\mathbb{Z})$ is generated by all
\begin{equation*}
    g\overline{G}^{(m)} \wedge h\overline{G}^{(m)} :=  [\tilde{g},\tilde{h}] \mod [F,R]
\end{equation*}
where $g\overline{G}^{(m)}, h\overline{G}^{(m)}$ run over all pairs of commuting elements in $\overline{G}/\overline{G}^{(m)}$. The restriction of $\delta_m$ to $H_2^\textrm{ab}(\overline{G}/\overline{G}^{(m)} ,\mathbb{Z})$ then can be explicitly described by
\begin{equation*}
    \delta_m(g\overline{G}^{(m)} \wedge h\overline{G}^{(m)}) = [g\overline{G}^{(m+1)},h\overline{G}^{(m+1)}] \ .
\end{equation*}
We also see that the image of $\gamma_m$ is contained in $H_2^\textrm{ab}(\overline{G}/\overline{G}^{(m)} ,\mathbb{Z})$ which makes it possible to compute the composite $\delta_m \circ \gamma_m$ as
\begin{align*}
    \delta_m \circ\gamma_m : \overline{G}^{(m-1)}/\overline{G}^{(m)} \otimes \overline{G}/\overline{G}^{(1)} & \longrightarrow \overline{G}^{(m)}/\overline{G}^{(m+1)} \\
    g\overline{G}^{(m)} \otimes h\overline{G}^{(1)} & \longmapsto [g\overline{G}^{(m+1)},h\overline{G}^{(m+1)}] \ .
\end{align*}
For all of this see \cite{Oli} p.\ 187. We combine this information into one commutative diagram
\begin{equation*}
    \xymatrix{
        & & \overline{G}^{(m-1)}/\overline{G}^{(m)} &  \\
       H_2^\textrm{ab}(\overline{G},\mathbb{Z})   \ar[r]   & \ker(\delta_{m-1})  \ar[r]^{\subseteq\qquad} & H_2(\overline{G}/\overline{G}^{(m-1)}, \mathbb{Z}) \ar[u]_{\delta_{m-1}}  &  \\
       H_2^\textrm{ab}(\overline{G},\mathbb{Z}) \ar[u]_{=}  \ar[r]  & \ker(\delta_m) \ar[u] \ar[r]^{\subseteq\qquad} & H_2(\overline{G}/\overline{G}^{(m)}, \mathbb{Z}) \ar[u] \ar[r]^{\delta_m} & \overline{G}^{(m)}/\overline{G}^{(m+1)}  \\
        & \ker(\delta_m \circ\gamma_m) \ar[u] \ar[r]^{\subseteq\qquad} &
         \overline{G}^{(m-1)}/\overline{G}^{(m)} \otimes \overline{G}/\overline{G}^{(1)}
         \ar[u]_{\gamma_m} \ar[r]^{\qquad\delta_m \circ\gamma_m} & \overline{G}^{(m)}/\overline{G}^{(m+1)} \ar[u]_{=}  }
\end{equation*}
whose two middle columns are exact. We claim that the two arrows emanating from the left most term $H_2^\textrm{ab}(\overline{G},\mathbb{Z})$ for any $m \geq 1$ are surjective. Let us first suppose that this indeed is the case. For $m=d-1$ we then obtain the equality
\begin{equation*}
    H_2^\textrm{ab}(\overline{G},\mathbb{Z}) = H_2(\overline{G},\mathbb{Z}) \ .
\end{equation*}
But according to \cite{Oli} Thm.\ 8.7 there always is an isomorphism
\begin{equation*}
    SK_1(\mathbb{Z}_p[\overline{G}]) \cong H_2(\overline{G},\mathbb{Z})/ H_2^\textrm{ab}(\overline{G},\mathbb{Z}) \ .
\end{equation*}
Hence the assertion of the present proposition follows. To check the claimed surjectivity it suffices, by induction with respect to $m$, to show that
\begin{equation*}
    \gamma_m \big( \ker(\delta_m \circ\gamma_m) \big) \subseteq \textrm{im}\big( H_2^\textrm{ab}(\overline{G},\mathbb{Z}) \longrightarrow H_2(\overline{G}/\overline{G}^{(m)}, \mathbb{Z})\big) \ .
\end{equation*}
We know from the property (b) in the list at the beginning of this section that $\overline{G}^{(m-1)}/\overline{G}^{(m)} \otimes \overline{G}/\overline{G}^{(1)}$ is the free $\mathbb{Z}/p^n \mathbb{Z}$-module generated by
\begin{equation*}
    \overline{E}_{m+i,i} \overline{G}^{(m)} \otimes \overline{E}_{k+1,k} \overline{G}^{(1)} \qquad\textrm{for $1 \leq i \leq d-m$ and $1 \leq k \leq d-1$}.
\end{equation*}
By the commutation relation \eqref{f:commutation} the image under $\delta_m \circ \gamma_m$ of this generator is equal to
\begin{equation*}
\left\{
  \begin{array}{rl}
    \phantom{-} \overline{E}_{m+i,i-1} \overline{G}^{(m+1)}, & \hbox{\textrm{if\ $i = k+1$,}} \\
    - \overline{E}_{m+i+1,i} \overline{G}^{(m+1)}, & \hbox{\textrm{if\ $m+i = k$,}} \\
    0, & \hbox{otherwise.}
  \end{array}
\right.
\end{equation*}
It follows that the kernel of $\delta_m \circ \gamma_m$ is the free $\mathbb{Z}/p^n \mathbb{Z}$-module generated by the elements
\begin{equation*}
    \overline{E}_{m+i+1,i+1} \overline{G}^{(m)} \otimes \overline{E}_{i+1,i} \overline{G}^{(1)} + \overline{E}_{m+i,i} \overline{G}^{(m)} \otimes \overline{E}_{m+i+1,m+i} \overline{G}^{(1)} \quad\textrm{for $1 \leq i \leq d-m-1$}
\end{equation*}
and
\begin{equation*}
    \overline{E}_{m+i,i} \overline{G}^{(m)} \otimes \overline{E}_{k+1,k} \overline{G}^{(1)} \quad\textrm{for $1 \leq i \leq d-m$, $1 \leq k \leq d-1$, and $k \neq i-1, m+i$}.
\end{equation*}
In the latter case $\overline{E}_{m+i,i}$ and $\overline{E}_{k+1,k}$ commute in $\overline{G}$ so that $\overline{E}_{m+i,i} \overline{G}^{(m)} \wedge \overline{E}_{k+1,k} \overline{G}^{(m)} \in H_2^\textrm{ab}(\overline{G}/\overline{G}^{(m)},\mathbb{Z})$ obviously lifts to $H_2^\textrm{ab}(\overline{G},\mathbb{Z})$. To deal with the former elements we fix a $1 \leq i \leq d-m-1$ and abbreviate
\begin{equation*}
    A := \overline{E}_{m+i+1,i+1} ,\ B := \overline{E}_{i+1,i} ,\ C := \overline{E}_{m+i,i} ,\ \textrm{and}\   D := \overline{E}_{m+i+1,m+i}
\end{equation*}
We need to show that $A \overline{G}^{(m)} \wedge B \overline{G}^{(m)} + C \overline{G}^{(m)} \wedge D \overline{G}^{(m)}$ lifts to $H_2^\textrm{ab}(\overline{G},\mathbb{Z})$. First of all we note that in the case $m = 1$ this element actually is equal to zero so that there is nothing to prove. We therefore assume in the following that $m > 1$. We have
\begin{equation*}
    E := \overline{E}_{m+i+1,i} = [A,B] = [C,D]^{-1} \in \overline{G}^{(m)}
\end{equation*}
and
\begin{equation*}
    [A,C] = [A,D] = [B,C] = [B,D] = [E,A] = [E,B] = [E,C] = [E,D] = 1 \ .
\end{equation*}
From this one easily derives that
\begin{equation*}
    [A,BD] = E, \ [C,BD] = E^{-1}, \ [AC,BD] = 1 \ .
\end{equation*}
The operation $\wedge$ being bi-additive as long as all terms in the respective identities are defined we compute
\begin{multline*}
    AC \overline{G}^{(m)} \wedge BD \overline{G}^{(m)} = A \overline{G}^{(m)} \wedge BD \overline{G}^{(m)} + C \wedge BD \overline{G}^{(m)} \\
     = A \overline{G}^{(m)} \wedge B \overline{G}^{(m)} + A \overline{G}^{(m)} \wedge D \overline{G}^{(m)} + C \overline{G}^{(m)} \wedge B \overline{G}^{(m)} + C \overline{G}^{(m)} \wedge D \overline{G}^{(m)}
\end{multline*}
and hence
\begin{multline*}
    A \overline{G}^{(m)} \wedge B \overline{G}^{(m)} + C \overline{G}^{(m)} \wedge D \overline{G}^{(m)} = \\ AC \overline{G}^{(m)} \wedge BD \overline{G}^{(m)} - A \overline{G}^{(m)} \wedge D \overline{G}^{(m)} - C \overline{G}^{(m)} \wedge B \overline{G}^{(m)} \ .
\end{multline*}
In all three summands on the right hand side the two group elements already commute in $\overline{G}$. It follows that the right hand side lifts to $H_2^\textrm{ab}(\overline{G},\mathbb{Z})$.
\end{proof}

\begin{corollary}
The group $G$ satisfies the hypothesis (SK).
\end{corollary}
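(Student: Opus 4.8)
The plan is simply to combine the preceding proposition with the definition $SK_1(\Lambda(G)) = \varprojlim_U SK_1(\mathbb{Z}_p[G/U])$, where $U$ runs over all open normal subgroups of $G$. All that is needed is to recognise the finite quotients $G(p^n)$ as a cofinal family among all finite quotients of $G$.

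First I would use that $G = N \cap GL_d(\mathbb{Z}_p)$ is a closed subgroup of the profinite group $GL_d(\mathbb{Z}_p)$, and that the principal congruence subgroups
\begin{equation*}
    K_n := \ker\big( GL_d(\mathbb{Z}_p) \longrightarrow GL_d(\mathbb{Z}/p^n\mathbb{Z}) \big)
\end{equation*}
form a fundamental system of open normal subgroups of $GL_d(\mathbb{Z}_p)$. Hence the subgroups $U_n := G \cap K_n$ form a fundamental system of open normal subgroups of $G$, and by construction $G/U_n = G(p^n)$. Given any open normal subgroup $U \subseteq G$ there is an $n$ with $U_n \subseteq U$, so that $G/U$ is a quotient of $G(p^n)$; therefore $\{U_n\}_{n \geq 1}$ is cofinal in the directed set of all open normal subgroups of $G$, and the projective limit defining $SK_1(\Lambda(G))$ may be computed along this subfamily:
\begin{equation*}
    SK_1(\Lambda(G)) = \varprojlim_n SK_1(\mathbb{Z}_p[G(p^n)]) \ .
\end{equation*}
By the preceding proposition every term on the right vanishes, whence $SK_1(\Lambda(G)) = 0$, which is precisely hypothesis (SK).

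There is no serious obstacle in the corollary itself: the only point worth spelling out is the cofinality of the congruence quotients, which is the standard fact that a closed subgroup of $GL_d(\mathbb{Z}_p)$ inherits its profinite topology from the congruence filtration. The real content of this section lies in the preceding proposition, whose proof proceeds through a careful analysis, via Oliver's Theorem 8.7 and the Hopf-formula description of $H_2$, of the descending central series of $\overline{G} = G(p^n)$ together with the explicit commutator calculus \eqref{f:commutation}.
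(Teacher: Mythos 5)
Your proposal is correct and is exactly the argument the paper intends: its entire proof of the corollary is the observation $G = \varprojlim_n G(p^n)$, i.e.\ precisely the cofinality of the congruence quotients that you spell out, after which the vanishing of $SK_1(\Lambda(G)) = \varprojlim_n SK_1(\mathbb{Z}_p[G(p^n)])$ follows from the preceding proposition. Your more detailed justification of the cofinality is a faithful expansion of the same one-line proof, not a different route.
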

\begin{proof}
We have $G = \varprojlim_n G(p^n)$.
\end{proof}

\end{document}